\newcommand{\R}{\mathbb{R}}
\newcommand{\Z}{\mathbb{Z}}
\newcommand{\N}{\mathbb{N}}
\newtheorem{theorem}{Theorem}[section]
\newtheorem{prop}[theorem]{Proposition}
\newtheorem{lemma}[theorem]{Lemma}
\newtheorem{rem}[theorem]{Remark}
\newtheorem{cor}[theorem]{Corollary}
\newtheorem{defi}[theorem]{Definition}
\title{Chaotic Boltzmann's Billiard Systems at positive energy}
\author[1]{Irene De Blasi}
\author[2]{Airi Takeuchi}
\author[1]{Susanna Terracini}
\affil[1]{\small Department of Mathematics, University of Turin, Turin, Italy}
\affil[2]{Institute of Mathematics, University of Augsburg, Augsburg, Germany}
\date{}  % Remove date
\begin{document}
\maketitle

\begin{abstract}
This paper deals with the so-called Boltzmann billiard, that is, a billiard subjected to a central force of the type $V(r)=-\alpha/r-\beta/r^2$, $\alpha$ and $\beta$ being real constants, and with a straight reflection table. In the particular case of $\alpha$ and $\beta$ positive, we prove the presence of a symbolic dynamics, and hence of positive topological entropy, at positive energy for $\beta$ sufficiently small. 
\end{abstract}

\textbf{Keywords}: billiards, Boltzmann, Kepler, topological chaos, topological entropy.  

%-------------------------------------------
% Paper Body
%-------------------------------------------
%--- Section ---%
%\section*{Nomenclature}

\section{Introduction}

In \cite{Boltzmann}, Boltzmann introduced a billiard model in $\R^2$ governed by a Keplerian potential with an inverse-square correction. The model features a reflecting wall at a fixed distance from the origin and combines gravitational and inverse-square forces, providing a framework for his ergodic hypothesis. 
More precisely, the dynamics are generated by the centrally symmetric potential
\begin{equation}\label{eq:Valphabeta}
V:\R^2\setminus\{0\}\to \R, \quad V(z)=-\dfrac{\alpha}{|z|}-\dfrac{\beta}{|z|^2}, 
\end{equation}
with $\alpha,\beta\in \R$, and reflections at a straight wall (see Figure \ref{fig:intro}).  
\begin{figure}       
\includegraphics[width=0.7\textwidth]{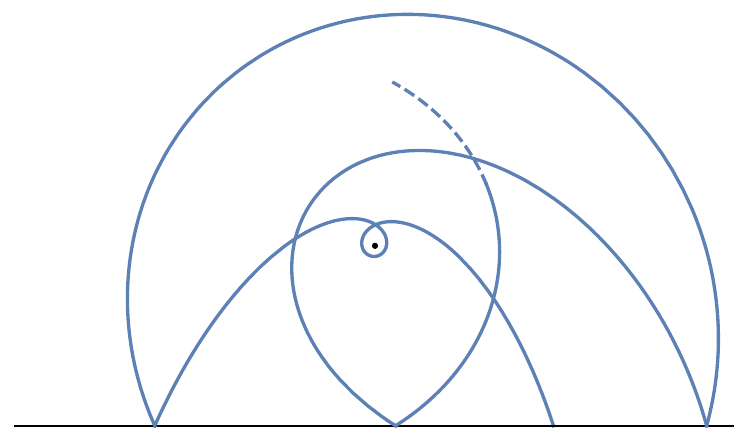}
\caption{Boltzmann billiard with $L=1, \alpha=1$, and $\beta=0.3$.}
\label{fig:intro}
\end{figure}
Without reflections, the system is integrable, with first integrals given by the energy $h$ and angular momentum $C$. Boltzmann suggested that elastic reflections destroy conservation of $C$ and hence integrability, leading the particle to explore the whole accessible region uniformly. This was meant as a concrete illustration of the \emph{ergodic hypothesis}, later reformulated by Ehrenfest-Afanassjewa \cite{Ehrenfest1911} as the existence of trajectories passing arbitrarily close to every point of the phase space (see \cite{Reichert2024, Birkhoff1931}).  

 More than a century later, Gallavotti and Jauslin \cite{Gallavotti-Jauslin} showed that Boltzmann's claim was actually incorrect: they proved integrability in the Keplerian case (\(\beta=0\)) by constructing an additional first integral involving $C$ and the components of Laplace-Runge-Lenz vector. Related results were obtained by Felder, Zhao, and others \cite{Zhao2021,Takeuchi-Zhao2023,Gasiorek}, who further analyzed the integrable structure and periodic orbits, as well as the possibility to apply KAM theory (\cite{kolmogorov1954hamiltonian, arnold1963kolmogorov, moser1962invariant}) to the case $|\beta|\ll1$.  

Our work is focused on the non-Keplerian case $\beta>0$. Our aim is to prove that for small positive $\beta$, Boltzmann's billiard exhibits chaotic dynamics if the particle moves on the same side of the origin. To this end we employ symbolic dynamics, which encodes trajectories into sequences of integers representing windings around the center. This provides a discrete model for describing chaos in continuous systems.  

The study of chaotic behavior in systems with central force potentials has long been central to dynamical systems. Foundational works include Poincar\'e's analysis of the three-body problem \cite{poincare1891trois}, the KAM theory, and the H\'enon-Heiles model \cite{Henon1964}, all of which showed how small perturbations may drastically change trajectory structures and generate chaos. Such effects are particularly relevant in celestial mechanics, where orbital instability and long-term evolution are key phenomena. In this context, the potential with an inverse-square correction arises naturally in the relativistic Kepler problem, where relativistic effects modify the classical potential \cite{boscaggin2023maupertuis}.  

Determining integrability of billiard systems remains central in dynamics, from the classical Birkhoff case and the \emph{Birkhoff-Poritsky conjecture} \cite{Birkhoff1927,Poritsky1950,KaloshinSorrentino2018} to billiards with non-constant or singular potentials \cite{Panov1994EllipticalBilliard,Dragovic1998,fedorov2001ellipsoidal,Bolotin2017DegenerateBilliards,JAUD2024105289,TAKEUCHI2024109411,BaranziniBarutelloDeBlasiTerracini2025}. For integrable billiards, the discrete Arnold-Liouville theorem \cite{veselov1991integrable} ensures the phase space is foliated by invariant tori, while non-integrable ones may display chaos.  Recently, in the works \cite{barutello2023chaotic, BaranziniBarutelloDeBlasiTerracini2025} a  billiard model with Keplerian potential and bounded reflecting wall was studied, proving the existence of topological chaos at high energies by explicitly constructing the associated symbolic dynamics. Inspired by their approach, we use variational methods to establish chaotic dynamics for Boltzmann's billiard for $h>0$ and $\beta>0$ small.  

Considering the potential $V$ in \eqref{eq:Valphabeta}, we define the reflecting wall as $\ell := \{y = -L\} \subset \R^2$,  $L > 0$ denoting the distance from the origin. We further assume that the particle's total energy $h$ is positive. After a suitable rescaling in space and time, our system is equivalent to the one with potential
\[
\tilde V(z)= -\frac{\tilde\alpha}{|z|}- \frac{ \tilde\beta}{ |z|^2}, \quad \tilde \alpha = \frac{\alpha}{L h}, \quad \tilde \beta = \frac{\beta}{L^2h}= \tilde \alpha \frac{\beta}{L \alpha}, 
\]
with normalized energy $\tilde h=1$ and distance $\tilde L=1$. To emphasize the role of physical parameters, however, we keep $h$ and $L$ explicit and set $\tilde \alpha=1$. Thus, we focus on the billiard system generated by  
\begin{equation}\label{eq:pot}
    V_{\beta}: \R^2\setminus\{0\}\to \R, \quad V_{\beta}(z)=-\dfrac{1}{|z|}-\dfrac{\beta}{|z|^2}, \quad \beta>0.
\end{equation}

We show that if $\beta$ is sufficiently small the system admits a symbolic dynamics, defined, according to \cite{hasselblatt2003first} as follows.  

\begin{defi}
Let $f: A \to A $ be an area-preserving map on a set $A$. We say that $f$ admits a symbolic dynamics over $A$ if there exists a (finite or countable) set of symbols $\mathcal S$, a set of bi-infinite words $\Omega=\mathcal S^\mathbb Z$, and a projection $\pi: A \to \Omega$ such that 
    \begin{enumerate}
        \item  $\pi: A \to \Omega$ is continuous and surjective,  
        \item $\sigma \circ \pi= \pi\circ f$ on $A'$, 
    \end{enumerate}
where $\sigma: \Omega\to\Omega$ is the Bernoulli shift, defined by $\sigma((s_{i})_i) = (s_{i+1})_i$.  
In such case, we say that $f$ is semi-conjugated to $\sigma$. 
\end{defi}

We stress that in our setting, with the particle moving in the same side as the centre, addinga strong-force term with coefficient $\beta$ small \emph{is not} a perturbation of the pure Keplerian case. Indeed, the distance from the origin is not bounded away from zero, so that close to the centre the second term in $V_\beta$ dominates and the dynamics is far from being a perturbative regime of $\beta=0$.  

The symbolic dynamics we construct associates billiard trajectories with bi-infinite sequences of integers, encoding the winding number of each arc with respect to the origin.   

\begin{theorem}\label{thm:final}
Let $F: X\to X$ be the billiard map associated with the Boltzmann system, where $X$ denotes the set of initial conditions $(p, v)\in \ell\times \R^2$, $v\cdot(0, 1)>0$, for which $F$ is well defined. Then, if $\beta>0$ is sufficiently small, there exists a subset $X' \subset X$, invariant under $F$, such that $F_{|_{X'}}$ is semi-conjugated to the Bernoulli shift on $\Omega=\left(\Z\setminus\{0\}\right)^\Z$.  \\
In other words, the Boltzmann billiard admits a symbolic dynamics with countably many symbols on a suitable subsystem. 
\end{theorem}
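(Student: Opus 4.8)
The plan is to construct the symbolic dynamics variationally, following the scheme of \cite{barutello2023chaotic, BaranziniBarutelloDeBlasiTerracini2025} but adapted to the strong-force regime. The building block is the following: given two points $p_0, p_1$ on the wall $\ell=\{y=-L\}$ and an integer $n\in\Z\setminus\{0\}$, I would look for a collision-free arc of the flow of $V_\beta$ at energy $h$ joining $p_0$ to $p_1$ with winding number (around the origin) exactly $n$. Because $\beta>0$, the potential $V_\beta$ satisfies the \emph{strong force condition} of Gordon: the action functional
\[
\mathcal A_h(\gamma)=\int_0^1\Bigl(\tfrac12|\dot\gamma|^2+h-V_\beta(\gamma)\Bigr)\,ds \quad\text{(or the Jacobi--Maupertuis length)}
\]
is coercive on the space of $H^1$ paths with prescribed endpoints and prescribed homotopy class in $\R^2\setminus\{0\}$, and diverges on sequences of paths whose length in that homotopy class blows up or which approach the origin. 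Hence in each class $n$ the functional attains a minimum, and — this is the crucial point the strong force buys us — \emph{the minimizer is automatically free of collisions}, so it is a genuine classical trajectory. One must also check it does not cross the wall $\ell$ (staying in the half-plane $y>-L$), which follows by a reflection/cut-and-paste argument decreasing the action, exactly as in the Keplerian billiard papers.

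**Next I would** assemble these arcs into a billiard trajectory by concatenation. Fix a bi-infinite sequence $(n_i)_{i\in\Z}\in(\Z\setminus\{0\})^\Z$. For each $i$, and for each choice of wall points, we have a minimizing arc $\gamma_i$ of type $n_i$; the concatenation has corners on $\ell$, and I would impose the billiard reflection law there by minimizing the total action over the positions $p_i\in\ell$ of the bounce points. The Maupertuis action of a concatenation $\gamma_i\cup\gamma_{i+1}$ is a function of $p_i$; at an interior minimum the tangency condition at $p_i$ is precisely the elastic reflection law (angle of incidence equals angle of reflection with respect to $\ell$). To make this a well-posed finite-dimensional-looking problem over a bi-infinite product one uses the standard trick: truncate to finitely many arcs $n_{-N},\dots,n_N$ with free endpoints on $\ell$ (or periodic boundary conditions), minimize, obtain a billiard trajectory segment realizing that finite word, and then let $N\to\infty$, extracting a locally uniformly convergent subsequence. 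Compactness here again rests on the strong-force coercivity, which gives uniform a priori bounds on the arcs (they cannot escape to infinity nor fall into the origin, and their lengths are controlled by $|n_i|$), together with uniform bounds on the bounce points $p_i$.

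**The map $\pi$ and the semi-conjugacy** are then built as follows: given $(p,v)\in X'$, where $X'$ is the set of initial conditions whose forward and backward billiard orbit is one of the trajectories constructed above, let $\pi(p,v)=(n_i)_{i\in\Z}$ where $n_i$ is the winding number of the $i$-th arc of the orbit of $(p,v)$ around the origin. Surjectivity is exactly the realization result of the previous paragraph; continuity follows from continuous dependence of the (locally unique) minimizers on data, i.e. from the fact that the winding number is a locally constant function on $X$ wherever the arc is collision-free, hence $\pi$ is continuous on $X'$; and $F$-invariance of $X'$ together with $\sigma\circ\pi=\pi\circ F$ is immediate from the construction, since applying $F$ discards the first arc and relabels. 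The role of $\beta$ being \emph{small} enters in two places: first, for $\beta$ small the rescaled potential is still dominated by the Keplerian term away from the origin, so one can control the geometry of the arcs (e.g. that arcs of every nonzero winding number genuinely exist and stay on the correct side of $\ell$) by perturbing off the explicitly-understood Kepler case of \cite{barutello2023chaotic}; second, smallness ensures the relevant arcs are short enough that the minimization over bounce points is nondegenerate. I would quantify these by an elementary continuity/openness argument rather than KAM.

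**The main obstacle** I expect is not the variational existence of the arcs — the strong force makes that clean — but rather \emph{ruling out that minimizing arcs degenerate onto the wall or onto each other}, and ensuring that the concatenated, reflected trajectory does not develop an extra unwanted collision with $\ell$ between the prescribed bounce points, which would corrupt the symbolic coding. In the bounded-wall Keplerian setting of \cite{BaranziniBarutelloDeBlasiTerracini2025} this is handled by a careful analysis of the geometry of Keplerian arcs; here the wall is an infinite line and the arcs can have large winding number, so one needs a uniform (in $n_i$ and in $\beta$ small) separation estimate keeping each arc strictly in $\{y>-L\}$ except at its two endpoints. I would obtain this by combining the energy constraint (which bounds how far an arc of given winding can stray toward the wall) with a variational exchange argument: if an arc touched $\ell$ a third time, reflecting part of it would strictly lower the action, contradicting minimality. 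Making that argument uniform, and simultaneously extracting the bi-infinite limit with control on all bounce points, is the technical heart of the proof.
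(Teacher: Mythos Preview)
Your overall architecture matches the paper's: minimize the Maupertuis functional in fixed homotopy classes, use the strong force to rule out collisions, concatenate by minimizing over bounce points, pass to bi-infinite sequences by diagonal extraction, and read off the symbolic coding via winding numbers. The genuine gap is in the step you yourself flag as the main obstacle, namely keeping minimizing arcs strictly in $\{y>-L\}$ except at their endpoints.

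Your proposed ``reflection/cut-and-paste'' argument does not work here: the line $\ell=\{y=-L\}$ is \emph{not} a symmetry axis of the potential, since the singularity sits at the origin, not on $\ell$. Reflecting a sub-arc across $\ell$ changes $|z|$ pointwise and hence changes $V_\beta$, so there is no reason the reflected competitor has smaller (or even comparable) action. The paper resolves this by a completely different mechanism: it first confines everything to a compact domain $D\subset\{y\geq -L\}$ bounded below by a compact segment $\ell'\subset\ell$ and above by a fixed orbit $\hat{\mathcal Z}$ of the flow itself (so tangency with $\hat{\mathcal Z}$ is forbidden by ODE uniqueness), and then rules out tangency with $\ell'$ by an explicit computation. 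Namely, for an orbit starting tangent to $\ell'$ one writes the total swept polar angle as $\hat\theta(h,\beta,C)=\tfrac{C}{\sqrt{C^2-2\beta}}\,\hat\theta(h,0,\sqrt{C^2-2\beta})$, uses that the Keplerian factor lies in $(\pi/2,\epsilon\pi)$ for some $\epsilon<1$ when $C$ ranges over the relevant compact set, and then takes $\beta$ small enough that the prefactor stays below $1/\epsilon$, forcing $\hat\theta<\pi$; a tangent orbit therefore never returns to $\ell$. This is precisely where the smallness of $\beta$ enters, and it is \emph{not} a perturbative statement about arcs with large winding (which, as you note, pass arbitrarily close to the origin and are far from Keplerian), but only about orbits tangent to the wall.

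A second, related issue is compactness of the bounce-point minimization: on the full infinite line $\ell$ there is no reason the minimizing configuration stays bounded (at energy $h>0$ orbits can escape to infinity). The paper's compact domain $D$ and segment $\ell'$ solve this simultaneously, and the boundary behaviour of the total length $W_n^{[s]}$ at the endpoints of $\ell'$ (coming from the acute angles of $\hat{\mathcal Z}$ with $\ell$) forces the minimum to be interior. Your sketch lacks any such confinement mechanism.
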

\noindent As a consequence, the system has \emph{infinite} topological entropy (see e.g.~\cite{brin2002introduction}).  
\begin{cor}\label{cor:final}
   For sufficiently small $\beta>0$, the Boltzmann billiard has infinite topological entropy. 
\end{cor}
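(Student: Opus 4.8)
The plan is to realize the symbolic dynamics by a shooting/variational argument: build, for every admissible bi-infinite sequence of nonzero integers $(n_i)_{i\in\Z}$, a billiard trajectory whose $i$-th arc between consecutive reflections at $\ell=\{y=-L\}$ winds exactly $n_i$ times around the origin. First I would set up the variational framework for a single arc: fixing two points $p,q\in\ell$ and a prescribed winding number $n\neq 0$, I would minimize (or find a min-max critical point of) the Maupertuis-Jacobi action functional at energy $h=1$ on the space of $H^1$ paths from $p$ to $q$ in the homotopy class determined by $n$ within $\R^2\setminus\{0\}$. The strong-force term $-\beta/|z|^2$ with $\beta>0$ is exactly what makes this work: it guarantees (via the Gordon-type estimate) that the action is coercive on each fixed homotopy class and that minimizing sequences cannot develop collisions with the origin, so a genuine collision-free minimizer exists with the prescribed winding. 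This is the crucial place where the hypothesis $\beta>0$ — rather than $\beta=0$ — is used, consistent with the remark in the excerpt that the strong-force term is not a perturbation.

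Next I would verify that these arcs concatenate into an honest billiard trajectory: the free endpoints on $\ell$ must satisfy the natural (transversality) boundary conditions coming from the variational principle, which force the incoming and outgoing velocities at each reflection point to make equal angles with $\ell$ — i.e. the elastic reflection law — and to point into the upper half-plane $v\cdot(0,1)>0$. The main quantitative work is a priori control: one must show that, for $\beta$ small, a minimizing arc with winding number $n$ stays (except near the origin) close to a Keplerian arc, that its two endpoints on $\ell$ and its entry/exit directions depend on $n$ in a controlled way, and that the reflection points do not run off to infinity along $\ell$. I expect to prove such bounds by comparison with explicit Keplerian (conic) solutions and by using conservation of energy together with the angular-momentum estimates that the winding constraint imposes; the smallness of $\beta$ enters to keep these arcs in the regime where the shooting map is well-controlled.

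With existence and a priori bounds in hand, I would then assemble the invariant set $X'\subset X$ as the set of initial conditions $(p,v)\in\ell\times\R^2$ whose forward and backward billiard orbit is (semi-)conjugate to such a coding, and define $\pi:X'\to\Omega=(\Z\setminus\{0\})^\Z$ by reading off the winding numbers of successive arcs. Surjectivity of $\pi$ is exactly the existence statement above (any prescribed sequence is realized), while the intertwining relation $\sigma\circ\pi=\pi\circ F$ is immediate from the definition of the coding. Continuity of $\pi$ would follow from continuous dependence of the variational arcs on their endpoints together with the a priori estimates (arcs with the same finite winding pattern form a compact family varying continuously), and $F$-invariance of $X'$ is built into the construction. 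The topological-entropy corollary is then standard: the shift on a full shift over a countably infinite alphabet has infinite entropy, and a surjective semiconjugacy does not decrease entropy, so $F$ — and hence the Boltzmann billiard — has infinite topological entropy.

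The hard part, I expect, is the a priori control of the variational arcs uniformly in the winding number $n$ and uniformly for small $\beta$: one must simultaneously rule out collisions (handled by the strong force), prevent the reflection points from escaping to infinity along $\ell$ or collapsing together, and show the coding map is continuous and surjective onto the \emph{full} countable shift rather than some proper subshift. Making the winding-number constraint interact cleanly with the free-endpoint (reflection) conditions — so that every integer sequence, including ones with wildly varying $|n_i|$, is admissible — is the delicate technical core of the argument.
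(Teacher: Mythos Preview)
Your plan is essentially the paper's own strategy: minimise the Maupertuis functional in fixed homotopy (winding) classes, use the strong force $\beta>0$ to exclude collisions, obtain the reflection law as the natural boundary condition for the Jacobi length, and then read off the semiconjugacy to the full shift on $(\Z\setminus\{0\})^{\Z}$, from which infinite entropy follows.

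The one place where your outline diverges from the paper is the compactness mechanism. You propose to control reflection points on $\ell$ via energy/angular-momentum a priori bounds and closeness to Keplerian arcs; the paper instead fixes once and for all a \emph{bounding trajectory} $\hat{\mathcal Z}$ (a solution of the $V_\beta$ system with winding number $1$, existing by perturbation from a far-out Keplerian hyperbola) and works inside the compact region $D$ enclosed by $\hat{\mathcal Z}$ and a compact segment $\ell'\subset\ell$. This device resolves in one stroke the escape-to-infinity problem you flag as the hard part, and moreover pins down precisely where smallness of $\beta$ enters: not in any global closeness of minimisers to Keplerian arcs, but only in showing that any solution \emph{tangent} to $\ell'$ has total swept angle $<\pi$ (a computation via the effective potential, reducing to a Keplerian half-angle estimate times a factor $C/\sqrt{C^2-2\beta}$), hence cannot return to $\ell$. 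This excludes tangential contact of minimisers with $\partial D$ and forces them to be interior critical points. Your proposal would need an equivalent mechanism; the bounding-orbit trick is the cleanest one and worth adopting.
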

\noindent We point out that, although we work on the normalised system with potential $\tilde V$, Theorem \ref{thm:final} and Corollary \ref{cor:final} work also in the general case $\alpha, h, L>0$. The mutual interaction between such parameters will be clarified in Section \ref{sec:conclusions}. 

The proof of Theorem \ref{thm:final} proceeds in two steps. First (Section \ref{sec:fix_ends}), we establish the existence of fixed-end solutions of the mechanical system associated with $V_\beta$, connecting two points of $\ell$ while satisfying prescribed topological constraints. This is done via a variational approach, relating such solutions to critical points of Maupertuis' functional \cite{ambrosetti2012periodic} in a suitable function space. A key issue is to show that arcs with endpoints on $\ell$ do not intersect $\ell$ elsewhere. We address this using a perturbative argument, starting from geometric properties of hyperbolic Keplerian arcs and extending them to the case $\beta>0$ and small. In the second step (Section \ref{sec:symDyn}), we build the symbolic dynamics on top of these variational solutions. Section \ref{sec:conclusions} will connect our results to the existing literature, underlying open problems and possible generalizations. Finally, Appendix \ref{sec:appA} will present a more general result, relating minimisers of Maupertuis functional and Jacobi length, used in Section \ref{sec:fix_ends} to exclude collisional arcs.

\section{Fixed-ends trajectories}\label{sec:fix_ends}

The first step in constructing a symbolic dynamics consists in finding suitable solutions of the fixed-ends problem associated with $V_\beta$, for positive energies and prescribed topological properties. 
We define the reflection wall as the line $\ell=\{(x,y)\in\R^2 \mid y=-L\}$ with $L>0$, and denote by 
\[
\mathcal P=\{(x,y)\in \R^2 \mid y>-L\}
\] 
the corresponding half-plane containing the origin.  
We are interested in solutions of the mechanical system associated with $V_\beta$ which start and end on $\ell$, while remaining entirely in $\mathcal P$. More precisely, given $z_0, z_1\in \ell$, we search for solutions of the problem 
\begin{equation}\label{eq:fixed_ends_prob}
    \begin{cases}
        \ddot z(t)=-\nabla V_\beta\!\left(z(t)\right)=\dfrac{z}{|z|^3}+2\beta\dfrac{z}{|z|^4}, & t\in [0, T],\\[6pt]
        \dfrac{|\dot z(t)|^2}{2}+V_\beta(z(t))=h, & t\in [0, T],\\[6pt]
        z(0)=z_0, \quad z(T)=z_1, & \\[6pt]
        z(t)\in\mathcal P, & t\in(0, T),
    \end{cases}
\end{equation}
for some $T>0$.  

To construct a symbolic dynamics, we classify the solutions of \eqref{eq:fixed_ends_prob} according to their \emph{winding number}, defined as follows. 

\begin{defi}\label{def:wind}
   Let $u:[a,b]\to \overline{\mathcal P}$ be such that $u(a), u(b)\in \ell$ and $u(s)\in \mathcal P\setminus\{0\}$ for every $s\in (a, b)$. Consider the closed curve $\gamma_u$ obtained by concatenating $u$ with the oriented segment from $u(b)$ to $u(a)$ (see Figure \ref{fig:wind}). The \emph{winding number} of $u$ with respect to the origin is defined as 
   \[
   \operatorname{Ind}(u) = \frac{1}{2 \pi i} \oint_{\gamma_{u}} \frac{du}{u} \in \Z.
   \] 
\end{defi}

This definition applies directly to non-collisional solutions of \eqref{eq:fixed_ends_prob}, providing a classification criterion for constructing the symbolic dynamics.  

\begin{figure}
        \centering        
        \begin{overpic}[width=0.7\textwidth]{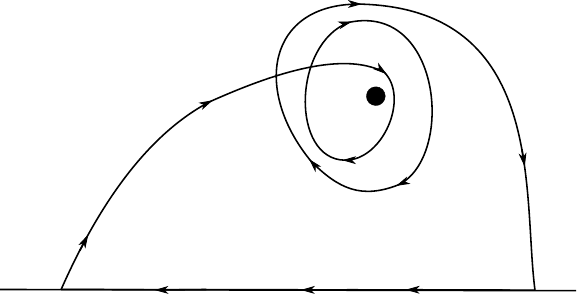}
        \put (102,0) {\rotatebox{0}{$\ell$}}
        \put (3,2) {\rotatebox{0}{$u(a)$}}
        \put (93,2) {\rotatebox{0}{$u(b)$}}
        \put (23,27) {\rotatebox{50}{$u(s)$}}
        \put (60,2) {\rotatebox{0}{$\gamma_u$}}
        \end{overpic}
\caption{Closed curve $\gamma_u$ constructed to compute the winding number of $u$ in Definition \ref{def:wind}. }
        \label{fig:wind}
\end{figure}

To find solutions of \eqref{eq:fixed_ends_prob} with prescribed winding numbers, we apply the direct method in the calculus of variations, searching for minimisers of the \emph{Maupertuis's functional} within a suitable class of paths. For this, compactness arguments are required, so we restrict the motion to a compact subset $D\subset\mathcal P$, bounded by $\ell$ and by a particular solution of \eqref{eq:fixed_ends_prob}.  In the following Lemma, and later on, whenever we do not specify otherwise the parameters $h, L, \beta$ are aribitrary and positive.

\begin{lemma}
    \label{lem: boundary_existence}
    There exists a solution $\hat{\mathcal{Z}}: \R \to \R^2$ of the mechanical system associated with $V_\beta$ at energy $h$ such that: 
    \begin{itemize}
    \item $\hat{\mathcal{Z}}(0), \hat{\mathcal{Z}}(T)\in \ell$ for some $T>0$, and $\hat{\mathcal{Z}}(t)\in \mathcal P$ for all $t\in(0, T)$;  
    \item $\angle \!\left(\dot{\hat{\mathcal{Z}}}(0), (1,0)\right)<\pi/2$ and $\angle\!\left((-1, 0),\dot{\hat{\mathcal{Z}}}(T)\right)>\pi/2$ (see Figure \ref{fig:hat_z}); 
    \item $\operatorname{Ind}(\hat{\mathcal Z})=1$.
    \end{itemize}
\end{lemma}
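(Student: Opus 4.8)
The plan is to construct $\hat{\mathcal Z}$ explicitly as the symmetric arc that leaves the wall $\ell$, goes around the origin once, and returns to $\ell$, exploiting the reflection symmetry of the problem across the vertical axis through the origin. First I would set up coordinates so that $\ell=\{y=-L\}$ and look for a solution that is symmetric under $(x,y)\mapsto(-x,y)$: such a solution must cross the positive $y$-axis orthogonally (i.e. $\dot{\hat{\mathcal Z}}$ horizontal) at its ``topmost'' passage, so it suffices to produce a half-arc from a point on $\ell$ to a point $(0,a)$ with $a>0$ on the $y$-axis, hitting the axis with horizontal velocity, and then reflect it. Since the potential $V_\beta$ is central and the energy is fixed at $h>0$, a single orbit of the mechanical system is determined by its angular momentum $C$ (and a choice of branch); as $C$ ranges over an interval the orbit's pericentre distance and its aperture vary continuously, so I would use a continuity/intermediate-value argument in $C$ to arrange both that the half-arc meets $\ell$ and that, after reflection, the full arc $\hat{\mathcal Z}$ has winding number exactly $1$ about the origin.

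The key steps, in order, would be: (i) fix energy $h$ and, using the explicit form of orbits for the potential $-1/|z|-\beta/|z|^2$ (which are conic-section-like curves with a shifted angular variable, the classical Newton ``revolving orbit'' picture), parametrise the family of bounded-away-from-collision orbits by angular momentum $C$; (ii) identify, for $C$ in a suitable range, the unique orbit that is symmetric about the $y$-axis and passes above the origin, and compute the angle swept between its two intersections with the line $\ell$; (iii) show this swept angle depends continuously on $C$ and, as $C$ varies, can be made to correspond to total winding number $1$ — here one uses that for $C$ large the orbit barely bends (winding $0$, and in fact it may miss $\ell$ or stay on the far side), while for $C$ small the strong-force term forces extra revolutions (winding $\ge 1$), so by continuity a value of $C$ giving $\operatorname{Ind}=1$ exists; (iv) check the angle conditions $\angle(\dot{\hat{\mathcal Z}}(0),(1,0))<\pi/2$ and $\angle((-1,0),\dot{\hat{\mathcal Z}}(T))>\pi/2$, which by the left-right symmetry reduce to a single inequality saying the arc leaves $\ell$ moving ``upward and to the right'', automatically satisfied by the symmetric configuration with pericentre on the positive $y$-axis; and (v) verify $\hat{\mathcal Z}(t)\in\mathcal P$ for $t\in(0,T)$, i.e. the arc does not dip back below $\ell$, which again follows from convexity/monotonicity of the $y$-coordinate along a symmetric arc lying above its pericentre.

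I expect the main obstacle to be step (iii) together with step (v): controlling the global geometry of the orbit — in particular guaranteeing that there exists an admissible $C$ for which the symmetric arc both closes up onto $\ell$ on the correct side \emph{and} realises winding number exactly $1$ \emph{and} stays in the open half-plane $\mathcal P$ in between. The revolving-orbit formula makes the winding number essentially a monotone (or at least controllable) function of $1/C$, so the intermediate-value step should go through, but the simultaneous constraint that the arc not re-cross $\ell$ prematurely couples the ``how far out'' and ``how much it turns'' behaviours and requires a careful estimate relating the pericentre distance, the energy $h$, and $L$. Because the statement only asserts existence for arbitrary positive $h,L,\beta$ (no smallness of $\beta$ is needed here), I cannot invoke the perturbative Keplerian comparison used later; instead the argument must rely on the explicit integrability of the $\beta>0$ central problem. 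Once $\hat{\mathcal Z}$ is in hand it will serve, as the text indicates, to bound the compact region $D$ on which Maupertuis' functional is later minimised.
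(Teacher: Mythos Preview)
Your dismissal of the perturbative Keplerian comparison is the central misstep. The paper's proof \emph{does} use exactly that comparison, and it works for arbitrary $\beta>0$: the point is not that $\beta$ is small, but that one may choose a Keplerian hyperbola at energy $h$ with pericentre $r_p$ arbitrarily large. Along such an orbit $|z|\ge r_p$ everywhere, so the correction $\beta/|z|^2$ is uniformly dominated by $1/|z|$ (their ratio is $\beta/|z|\le \beta/r_p$), and continuous dependence of solutions on the vector field over the compact time interval $[0,T]$ carries all three open conditions (endpoints on $\ell$, angle inequalities, $\operatorname{Ind}=1$, arc in $\mathcal P$) from the Keplerian arc to the $V_\beta$ arc. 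This is a two-line argument once you see that ``far from the origin'' is the relevant smallness, not $\beta$ itself.

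Your alternative route via a symmetric orbit and an intermediate-value argument in $C$ is not wrong in spirit, but the justifications you give for steps (iv) and (v) do not stand. In (v) you invoke ``convexity/monotonicity of the $y$-coordinate along a symmetric arc lying above its pericentre'': this is false for any arc that winds around the origin, since $y=r\sin\theta$ oscillates as $\theta$ sweeps through multiples of $\pi$. Whether the symmetric arc with $\operatorname{Ind}=1$ stays above $\ell$ is a genuine geometric question that depends on where the winding occurs relative to the circle $|z|=L$, and needs an honest estimate (for instance, comparing the revolving-orbit formula $r(\theta)$ against the line $r\sin\theta=-L$). Step (iv) likewise is not automatic: the sign of the horizontal velocity at the wall depends on the asymptotic direction of the outgoing branch, which you have not controlled. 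Both gaps are fixable with the explicit $r(\theta)$ formula, but the resulting argument is substantially longer than the paper's, and you gain nothing by avoiding the perturbative route.
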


\begin{proof}
At large distances from the center, the potential $V_\beta$ may be regarded as a small perturbation of the Keplerian potential $V_{\text{kep}}(z)=-1/|z|$. For $V_{\text{kep}}$, the statement holds, since one can construct infinitely many hyperbolic Keplerian arcs at energy $h$ satisfying the required properties, with pericentres arbitrarily far from $0$. By differentiable dependence of ODEs on parameters, and by the regularity of both $V_{\text{kep}}$ and $V_\beta$ away from the origin, the claim follows.
\end{proof}

\begin{figure}
        \centering        
        \begin{overpic}[width=0.7\textwidth]{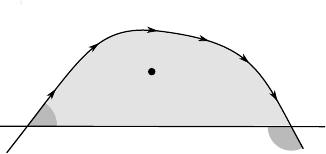}
        \put (-3,8) {\rotatebox{0}{$\ell$}}
        \put (8,4) {\rotatebox{0}{$(\hat x_0, -L)$}}
        \put (89,10) {\rotatebox{0}{$(\hat x_1, -L)$}}
        \put (17,27) {\rotatebox{60}{$\hat{\mathcal Z}(s)$}}
        \put (89,30) {\rotatebox{0}{$\mathcal P$}}
        \put (65,20) {\rotatebox{0}{$D$}}
        \end{overpic}
\caption{Bounding trajectory $\hat{\mathcal Z}$ from Lemma \ref{lem: boundary_existence}, along with the domain $D\subset\mathcal P$.}
        \label{fig:hat_z}
\end{figure}

The trajectory $\hat{\mathcal{Z}}$ needs not to be  unique; nevertheless, it suffices to fix one such trajectory for the direct method argument.  

\medskip

Let us now select an orbit $\hat{\mathcal{Z}}$ intersecting $\ell$ at two points $\hat{\mathcal{Z}}(t_0)=(\hat{x}_0, -L)$ and $\hat{\mathcal{Z}}(t_1) = (\hat{x}_1, -L)$. Let $\ell' \subset \ell$ be the line segment with endpoints $\hat{\mathcal{Z}}(t_0)$ and $\hat{\mathcal{Z}}(t_1)$, and denote by $D$ the compact region bounded by $\hat{\mathcal{Z}}$ and $\ell$ (see Figure \ref{fig:hat_z}). Observe that, fixing $L$ and $h$ if $\beta>0$ is bounded from above by a suitable constant, it is possible to find a compact segment $\tilde \ell\subset\ell$ such that, for all $\beta$ under consideration, one has $\ell'\subset \tilde\ell$.  

We now restrict attention to generic non-collisional paths starting and ending on $\ell'$. The notion of winding number provides a topological classification.  

\begin{defi}
    Given $z_0, z_1\in\ell'$ and $k\in \mathbb Z\setminus \{0\}$, define
\[
\hat{\mathcal H}^k_{z_0 z_1} = \{u \in H^1([0,1],D) \mid u(0) = z_0, u(1) = z_1, \ u(s) \neq 0 \ \forall s \in [0,1],\  \operatorname{Ind}(u) = k \}.
\]
The set $\hat{\mathcal H}^k_{z_0 z_1}$ is open in $\mathcal C^0$. One can also consider the class of collisional paths
\[
Coll_{z_0 z_1}= \{u \in H^1([0,1],D) \mid u(0) = z_0, u(1) = z_1,\ \exists s \in [0,1]: u(s) = 0 \}.
\]
It is easy to check that $Coll_{z_0z_1}=\partial \hat{\mathcal H}_{z_0z_1}$ for each $k\neq 0$, so we define the closure
\[
\mathcal H^k_{z_0 z_1} = \hat{\mathcal H}^k_{z_0 z_1} \sqcup Coll_{z_0 z_1}.
\]
\end{defi}

\begin{rem}\label{rem:H_weak_closed}
    Since weak convergence in $H^1$ implies uniform convergence, the set $\mathcal H_{z_0z_1}^k$ is weakly closed in the $H^1$ topology. 
\end{rem}

We are now ready to introduce the functional we aim to minimise by the direct method. We use the so-called Maupertuis functional (see \cite{ambrosetti2012periodic}), in view of the correspondence between its critical points and solutions of \eqref{eq:fixed_ends_prob}.     

\begin{defi}
    The \emph{Maupertuis' functional} $\mathcal{M}: \mathcal H^k_{z_0 z_1} \to \R \cup \{\infty\}$ is defined as
    \[
    \mathcal{M}(u) =  \frac{1}{2} \int_{0}^{1} |\dot{u}(s)|^2 ds \int_{0}^{1} (h - V_{\beta}(u(s))) ds.
    \]
\end{defi}

\begin{theorem}[\cite{ambrosetti2012periodic}]\label{thm:M_crit_point}
    If $u: [a,b]\to \R^2$ is a collision-free critical point of $\mathcal M$ in 
\[
\{v\in H^1([a,b],\R^2) \mid v(a)=z_0, \ v(b)=z_1\}, 
\]
such that $\mathcal M(u)>0$, then, setting 
\[
\omega^2=\dfrac{\int_a^b(h-V_\beta(u(s)))\,ds}{\int_a^b|\dot u(s)|^2\,ds},
\]
the reparametrisation $z(t)=u(\omega t)$ is a classical solution of 
\begin{equation}\label{eq:differential}
    \begin{cases}
\ddot{z}(t)=-\nabla V_{\beta}(z(t)), & t\in [0,T],\\[6pt]
\dfrac{|\dot{z}(t)|^2}{2}+V_\beta(z(t))=h, & t\in[0,T],\\[6pt]
z(0)=z_0,\quad z(T)=z_1,
\end{cases}
\end{equation}
for some $T>0$.
\end{theorem}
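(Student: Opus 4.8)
The statement is the classical Maupertuis--Jacobi reduction, and the plan is to prove it by computing the first variation of $\mathcal M$, identifying the Euler--Lagrange equation, bootstrapping the regularity of the (a priori only $H^1$) critical point, and finally reading off the fixed-energy Newton equation after the affine time rescaling prescribed by $\omega$. Write $I_1(u)=\int_a^b|\dot u|^2\,ds$ and $I_2(u)=\int_a^b(h-V_\beta(u))\,ds$, so that $\mathcal M=\tfrac12 I_1 I_2$. The first thing to verify is that $\mathcal M$ is differentiable near $u$: since $u$ is collision-free and continuous on the compact interval $[a,b]$, its image lies in a compact subset of $\R^2\setminus\{0\}$, and the same holds for every $H^1$-curve $C^0$-close to $u$; as $V_\beta\in C^\infty(\R^2\setminus\{0\})$, both $I_1$ and $I_2$ are smooth near $u$. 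For $\phi\in H^1([a,b],\R^2)$ with $\phi(a)=\phi(b)=0$ one then computes
\[
d\mathcal M(u)[\phi]=I_2(u)\int_a^b\dot u\cdot\dot\phi\,ds-\frac{I_1(u)}{2}\int_a^b\nabla V_\beta(u)\cdot\phi\,ds.
\]
Note also that $\mathcal M(u)>0$ forces $I_1(u)>0$ and $I_2(u)>0$ (the latter is automatic, since $h>0$ and $V_\beta<0$), which makes the divisions below legitimate.

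Imposing $d\mathcal M(u)[\phi]=0$ for all such $\phi$ gives the weak form of
\[
\ddot u=-\frac{I_1(u)}{2\,I_2(u)}\,\nabla V_\beta(u)\qquad\text{on }[a,b].
\]
Since $u$ stays away from the origin in $D$, the right-hand side along $u$ is bounded and measurable, so $u\in H^2$, hence $u\in C^1$; then $\nabla V_\beta\circ u$ is continuous, so $\ddot u$ is continuous and $u\in C^2$; iterating and using $V_\beta\in C^\infty$ on $\R^2\setminus\{0\}$ yields $u\in C^\infty([a,b],\R^2\setminus\{0\})$. Thus $u$ is a classical solution of the displayed equation with $u(a)=z_0$, $u(b)=z_1$.

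It remains to rescale time. With $\omega$ as in the statement and $z(t)=u(\omega t)$, after translating the interval so that it becomes $[0,T]$ with $T=(b-a)/\omega$, the chain rule turns the Euler--Lagrange equation into the first line of \eqref{eq:differential}, $\ddot z(t)=-\nabla V_\beta(z(t))$, while $z(0)=z_0$ and $z(T)=z_1$. For the energy, dot the equation for $u$ with $\dot u$ to get
\[
\frac{d}{ds}\Big(\frac12|\dot u|^2\Big)=-\frac{I_1(u)}{2\,I_2(u)}\,\frac{d}{ds}V_\beta(u),
\]
so that $\tfrac12|\dot u|^2+\tfrac{I_1}{2I_2}V_\beta(u)$ is constant on $[a,b]$; integrating this identity over $[a,b]$ and substituting the definitions of $I_1$ and $I_2$ identifies the constant, and rescaling back via $s=\omega t$ then yields exactly $\tfrac12|\dot z(t)|^2+V_\beta(z(t))=h$. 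This closes the argument.

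The only genuinely delicate point is the very first one: ensuring that the variational computation is meaningful, i.e.\ that $\mathcal M$ is of class $C^1$ on a $C^0$-neighbourhood of $u$ and that $I_1(u),I_2(u)$ are strictly positive. Both rely crucially on $u$ being collision-free and confined to the compact region $D$ — without the collision-free property, $\nabla V_\beta\circ u$ need not even be integrable and the Euler--Lagrange equation is meaningless. This is precisely why, elsewhere in the paper, showing that the variational minimisers avoid $Coll_{z_0z_1}$ is the hard part, whereas on the collision-free part of $\mathcal H^k_{z_0z_1}$ the reduction carried out here is standard.
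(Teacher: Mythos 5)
Your strategy --- first variation of $\mathcal M$, Euler--Lagrange equation, regularity bootstrap, then time rescaling and identification of the energy constant --- is the standard direct proof of the Maupertuis principle, and is the natural thing to do here since the paper itself offers no proof but quotes the result from \cite{ambrosetti2012periodic}. The differentiability discussion, the formula for $d\mathcal M(u)[\phi]$, the weak equation $\ddot u=-\tfrac{I_1}{2I_2}\nabla V_\beta(u)$, and the bootstrap to a classical (indeed smooth) solution are all correct.

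The final rescaling step, however, does not go through as written, and the inconsistency is visible from your own Euler--Lagrange equation. If $z(t)=u(a+\omega t)$, then $\ddot z(t)=\omega^2\ddot u(a+\omega t)=-\omega^2\,\tfrac{I_1}{2I_2}\,\nabla V_\beta(z(t))$, so $\ddot z=-\nabla V_\beta(z)$ forces $\omega^2=2I_2/I_1$, i.e.
\[
\omega^2=\frac{\int_a^b\bigl(h-V_\beta(u(s))\bigr)\,ds}{\tfrac12\int_a^b|\dot u(s)|^2\,ds},
\]
which is \emph{twice} the value in the statement of Theorem \ref{thm:M_crit_point}. The energy identity gives the same verdict: your conserved quantity yields $\tfrac12|\dot u|^2+\tfrac{I_1}{2I_2}V_\beta(u)\equiv\tfrac{I_1h}{2I_2}$ after integration over $[a,b]$, hence $\tfrac{I_2}{I_1}|\dot u|^2+V_\beta(u)\equiv h$, and $\tfrac12|\dot z|^2+V_\beta(z)=h$ again requires $\omega^2=2I_2/I_1$; with the stated $\omega^2=I_2/I_1$ one instead obtains $\ddot z=-\tfrac12\nabla V_\beta(z)$ and $\tfrac12|\dot z|^2+V_\beta(z)=\tfrac12\bigl(h+V_\beta(z)\bigr)$, which is not even constant. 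A quick sanity check with $V_\beta\equiv0$ (constant-speed segment, target speed $\sqrt{2h}$) confirms the factor $2$. So your claim that ``the chain rule turns the Euler--Lagrange equation into the first line of \eqref{eq:differential}'' is false for the $\omega$ you (and the paper) use: either the chain-rule step must be corrected, or --- more likely --- the paper's formula for $\omega^2$ has dropped the factor $\tfrac12$ in the denominator present in the normalisation of \cite{ambrosetti2012periodic}. The fix is only a constant, but as written the last paragraph of your argument asserts a computation that your own displayed equation contradicts, and a complete proof should carry it out and flag the correction.
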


We now establish some properties of $\mathcal M$ on $\mathcal H^k_{z_0z_1}$, $k\in\Z\setminus\{0\}$.  

\begin{prop}\label{prop:M_prop}
    For every $z_0,z_1\in\ell'$ and $k\in\Z\setminus\{0\}$: 
    \begin{enumerate}
        \item there exists $C=C(k)>0$ such that $\mathcal M(u)\geq C$ for all $u\in \mathcal H^k_{z_0z_1}$; 
        \item $\mathcal M$ is weakly lower semicontinuous (w.l.s.c.) on $\mathcal H^k_{z_0z_1}$; 
        \item $\mathcal M$ is coercive on $\mathcal H^k_{z_0z_1}$. 
    \end{enumerate}
\end{prop}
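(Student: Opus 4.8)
The plan is to treat the three items essentially independently, exploiting the structure of $\mathcal M$ as a product of a Dirichlet-type term and a potential term, the fact that the domain $D$ is compact and bounded away from the wall-free part of $\ell$, and—crucially—the presence of the repulsive $-\beta/|z|^2$ part of $V_\beta$, which makes the factor $h-V_\beta(u)$ blow up like $\beta/|z|^2$ near the origin.

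\medskip
\emph{Item (1): a positive lower bound depending on $k$.} I would start from the elementary inequality relating $\mathcal M$ to the Jacobi length: by Cauchy--Schwarz,
\[
\mathcal M(u)=\tfrac12\Big(\int_0^1|\dot u|^2\,ds\Big)\Big(\int_0^1(h-V_\beta(u))\,ds\Big)\ \ge\ \tfrac12\Big(\int_0^1|\dot u|\sqrt{h-V_\beta(u)}\,ds\Big)^2=\tfrac12\,\ell_J(u)^2,
\]
where $\ell_J$ is the Jacobi length of the path in the metric $(h-V_\beta)\,|dz|^2$ (this is precisely the identity referred to in Appendix \ref{sec:appA}). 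Since $h-V_\beta(z)\ge h+\beta/|z|^2$ for $z\neq 0$, any path in $\hat{\mathcal H}^k_{z_0z_1}$ with $|k|$ windings around the origin must, each time it winds, either stay at distance $\ge \rho$ from $0$ — contributing a definite amount of Euclidean length, hence Jacobi length $\ge \sqrt h\cdot(\text{const}\cdot|k|\rho)$ — or dip below distance $\rho$, where the weight $\sqrt{\beta}/|z|$ forces a large Jacobi length of a loop enclosing the origin (a loop around $0$ at scale $\le\rho$ has Jacobi length $\gtrsim \sqrt\beta$, independently of how small the loop is, since $\int \sqrt\beta/|z|\,|dz|$ over a closed curve winding once around $0$ is $\ge 2\pi\sqrt\beta$). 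Optimising over $\rho$ yields $\ell_J(u)\ge c(k,\beta,h,L)>0$, hence $\mathcal M(u)\ge C(k)>0$. On $Coll_{z_0z_1}$ the functional is $+\infty$ (the potential term diverges while the kinetic term stays positive once $k\neq 0$), so the bound extends to all of $\mathcal H^k_{z_0z_1}$.

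\medskip
\emph{Item (2): weak lower semicontinuity.} Let $u_n\rightharpoonup u$ in $H^1([0,1],D)$; by Remark \ref{rem:H_weak_closed} the limit lies in $\mathcal H^k_{z_0z_1}$, and by compact embedding $u_n\to u$ uniformly. The Dirichlet term $u\mapsto\int_0^1|\dot u|^2$ is convex and strongly continuous, hence w.l.s.c., so $\liminf_n\int|\dot u_n|^2\ge\int|\dot u|^2$. For the potential factor I would use Fatou together with uniform convergence: since $h-V_\beta\ge 0$ and $V_\beta$ is continuous on $\R^2\setminus\{0\}$ with $h-V_\beta(z)\to+\infty$ as $z\to 0$, lower semicontinuity of $z\mapsto h-V_\beta(z)$ (extended by $+\infty$ at $0$) plus $u_n\to u$ pointwise gives $\liminf_n\int_0^1(h-V_\beta(u_n))\ge\int_0^1(h-V_\beta(u))$ by Fatou. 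Both factors are nonnegative, and by Item (1) the $u$-factors are bounded below by positive constants, so the product passes to the liminf: $\liminf_n\mathcal M(u_n)\ge\mathcal M(u)$. (If $u\in Coll_{z_0z_1}$ then $\mathcal M(u)=+\infty$ and, again by Fatou applied near the collision instant, $\mathcal M(u_n)\to+\infty$ as well.)

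\medskip
\emph{Item (3): coercivity.} We must show $\mathcal M(u)\to+\infty$ along any sequence in $\mathcal H^k_{z_0z_1}$ with $\|u_n\|_{H^1}\to\infty$, i.e. with $\int_0^1|\dot u_n|^2\to\infty$ (the $L^\infty$ part is controlled since $u_n$ takes values in the bounded set $D$). The potential factor satisfies $\int_0^1(h-V_\beta(u_n))\,ds\ge h>0$ uniformly, so $\mathcal M(u_n)\ge \tfrac h2\int_0^1|\dot u_n|^2\to+\infty$. This is the shortest of the three.

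\medskip
I expect Item (1) — the uniform positive lower bound — to be the main obstacle: the subtlety is that a path can have nonzero winding number while its Euclidean length collapses (it can wind tightly around $0$), so the bound cannot come from Euclidean length alone and genuinely requires the repulsive $-\beta/|z|^2$ term, whose contribution to the Jacobi length of a small loop around the origin is bounded below by $2\pi\sqrt\beta|k|$ regardless of the loop's size. Making the "either stays out at radius $\rho$ and pays Euclidean length, or comes in and pays $\sqrt\beta$" dichotomy quantitative — and checking that the resulting constant, though $\beta$-dependent, is positive for every admissible $\beta>0$ — is the one place where some care is needed; everything else is standard direct-method bookkeeping.
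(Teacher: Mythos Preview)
Your proofs of (2) and (3) match the paper's almost verbatim: Fatou for the potential factor and weak lower semicontinuity of the Dirichlet integral for (2); boundedness of $D$ forcing $\|\dot u_n\|_2\to\infty$, together with the uniform bound $\int(h-V_\beta(u_n))\geq h$, for (3). Item (1), however, you handle by a genuinely different route. The paper argues more softly: since $\int_0^1(h-V_\beta(u))\,ds>h$, it suffices to bound $\|\dot u\|_2$ away from zero, and if $\|\dot u_n\|_2\to 0$ along a sequence in $\mathcal H^k_{z_0z_1}$, the Poincar\'e--Wirtinger inequality forces uniform convergence to a constant, which cannot belong to $\mathcal H^k_{z_0z_1}$ when $k\neq 0$. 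Nothing about $V_\beta$ beyond $V_\beta<0$ is used. Your approach via the Jacobi length and the strong-force term is also correct --- indeed, in polar coordinates $u=re^{i\theta}$ one has directly $\ell_J(u)\geq\sqrt\beta\int_0^1|\dot\theta|\,ds\geq\sqrt\beta\,(2\pi|k|-\pi)$, which makes your dichotomy on the scale $\rho$ and the optimisation over it unnecessary --- and it produces an explicit constant $C(k)=\tfrac12\beta(2\pi|k|-\pi)^2$ that displays both the $\beta$- and the $k$-dependence. The trade-off is clear: the paper's argument is shorter and works for any attractive potential, while yours is quantitative and makes visible where the strong-force correction actually does the work.
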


\begin{proof}
(1) Since 
    \[
     \int_{0}^{1} (h - V_{\beta}(u(s)))\,ds = \int_{0}^{1} \left(h + \frac{1}{|u(s)|} + \frac{\beta}{|u(s)|^2}\right) ds > h, 
    \]
    it suffices to show that there exists $C>0$ such that 
    $\| \dot{u}\|^2_{2} \geq C,$
    where $\|\cdot\|_2$ is the standard $L^2$ norm.  
    Assume by contradiction that there exists a sequence $\{u_n\}$ in $\mathcal H^k_{z_0, z_1}$ with $\|\dot{u}_n\|_2 \to 0$. Since $\{u_n\}$ is bounded, up to subsequences it converges uniformly to some $v \in \mathcal H^k_{z_0 z_1}$.  
    Write $u_n (s) = \bar{u}_n + w_n (s),$ with $\bar{u}_n = \int_{0}^{1} u_n(s)\,ds$. Then $\|\dot w_n\|_2 \to 0$.  
    By the Poincar\'e--Wirtinger inequality,  
    $
    \|w_n \|_2 \leq \|\dot{w_n} \|_2,
    $
    so $\|w_n\|_2 \to 0$. This implies $v(s)$ is constant for almost every $s\in [0,1]$, contradicting the assumption $k\neq 0$.  

(2) It is known that $\mathcal{M}$ is w.l.s.c.~iff the sublevel sets 
    \[
    M^{c}:= \{ u \in \mathcal H^k_{z_0 z_1}  \mid \mathcal{M}(u) \leq c\}
    \]
    are weakly closed for all $c>0$ (see \cite{brezis2011functional}). 
    Fix $c>0$ and take a sequence $\{u_n\}\subset M^{c}$ converging to $u \in \mathcal H^k_{z_0 z_1}$ weakly in $H^1$. Since the $H^1$ norm is w.l.s.c.,  
    \[
    \|u\|^2_2 + \|\dot{u}\|^2_2 \leq \liminf_{n \to \infty} (\|u_n\|^2_2 + \|\dot{u}_n\|^2_2 ).
    \]
    By uniform convergence of $u_n\to u$, we also have
    \[
     \|\dot{u}\|^2_2 \leq \liminf_{n \to \infty} \|\dot{u}_n\|^2_2.
    \]
  Fatou's lemma then yields
    \[
    \int_{0}^{1} (h - V_{\beta}(u(s))) ds \leq \liminf_{n \to \infty} \int_{0}^{1} (h - V_{\beta}(u_n(s))) ds. 
    \]
    Combining these estimates,  
    \[
    \begin{split}
    \mathcal M(u) &= \tfrac{1}{2} \|\dot{u}\|_2^2 \int_{0}^{1} (h - V_{\beta}(u(s))) ds 
     \leq \tfrac{1}{2}  \liminf_{n \to \infty } \left( \|\dot{u_n}\|_2^2 \int_{0}^{1} (h - V_{\beta}(u_n(s))) ds \right) \\
     &\leq \liminf_{n \to \infty } \mathcal{M} (u_n) 
     \leq c.
    \end{split}
    \]

(3)  Take a sequence $\{u_n\}\subset \mathcal H^k_{z_0 z_1}$ with $\|u_n\|_{H^1} \to \infty$. Since $\|u_n\|_2$ is bounded, it follows that $\|\dot{u_n}\|_2^2 \to \infty $. Moreover,
    \[
    \int_{0}^{1} (h - V_{\beta}(u_n(s))) ds >0,
    \]
    so $\mathcal{M}(u_n) \to \infty$.
\end{proof}

Remark \ref{rem:H_weak_closed}, together with Proposition \ref{prop:M_prop}, allows us to apply the direct method in the calculus of variations, yielding the following existence result.

\begin{theorem}
\label{thm: exsistence_minimizer}
\textcolor{red}{Parametri}    
Take $\hat{\mathcal{Z}}$ as in Lemma \ref{lem: boundary_existence}. For any $z_0,z_1 \in \ell'$, there exists $u \in \mathcal H^k_{z_0 z_1}$ such that
    \[
    \mathcal{M}(u) = \min_{v \in \mathcal H^k_{z_0 z_1}} \mathcal{M}(v).
    \]
\end{theorem}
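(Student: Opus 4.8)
The plan is to apply the direct method of the calculus of variations in the Hilbert space $H^1([0,1],\mathbb{R}^2)$, using the three structural properties of $\mathcal{M}$ collected in Proposition \ref{prop:M_prop}. Concretely, I would first fix $z_0,z_1\in\ell'$ and $k\in\mathbb{Z}\setminus\{0\}$, set $m=\inf_{v\in\mathcal{H}^k_{z_0z_1}}\mathcal{M}(v)$, and observe that $m\geq C(k)>0$ by part (1) of the Proposition, so in particular $m$ is a finite real number and the infimum is over a nonempty set (nonemptiness follows because $\hat{\mathcal{Z}}$, or a suitable reparametrisation of a subarc of it together with segments along $\ell'$, produces an admissible competitor of any prescribed index, or more simply because one can explicitly write down an $H^1$ loop in $D$ of any winding number $k$ joining $z_0$ to $z_1$).

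Next I would take a minimising sequence $\{u_n\}\subset\mathcal{H}^k_{z_0z_1}$ with $\mathcal{M}(u_n)\to m$. By coercivity (part (3)), $\mathcal{M}(u_n)\to m<\infty$ forces $\|u_n\|_{H^1}$ to stay bounded; hence, up to extracting a subsequence (not relabelled), $u_n\rightharpoonup u$ weakly in $H^1$ for some $u\in H^1([0,1],\mathbb{R}^2)$. Since weak $H^1$ convergence on a bounded interval implies uniform convergence (compact Sobolev embedding $H^1([0,1])\hookrightarrow C^0([0,1])$), the boundary conditions pass to the limit, $u(0)=z_0$, $u(1)=z_1$, the constraint $u(s)\in D$ is preserved because $D$ is closed, and by Remark \ref{rem:H_weak_closed} the set $\mathcal{H}^k_{z_0z_1}$ is weakly closed, so $u\in\mathcal{H}^k_{z_0z_1}$ — here it is essential that $\mathcal{H}^k$ was \emph{defined as the closure} including collisional paths, so that the limit, which a priori might touch the origin, still lies in the admissible class. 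Finally, weak lower semicontinuity (part (2)) gives $\mathcal{M}(u)\leq\liminf_n\mathcal{M}(u_n)=m$, and since $u$ is admissible, $\mathcal{M}(u)\geq m$; therefore $\mathcal{M}(u)=m$, which is the claimed minimiser.

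The one genuinely delicate point — and the reason the statement is phrased with the closure $\mathcal{H}^k_{z_0z_1}=\hat{\mathcal{H}}^k_{z_0z_1}\sqcup Coll_{z_0z_1}$ rather than with the open class $\hat{\mathcal{H}}^k_{z_0z_1}$ — is that the weak $H^1$ limit of a minimising sequence of non-collisional arcs of index $k$ need not itself be non-collisional, and the index $\operatorname{Ind}$ is not continuous across collisions; enlarging to the weakly closed set $\mathcal{H}^k$ is precisely what makes the direct method close, at the cost of not yet knowing the minimiser is collision-free. I would note that establishing $\mathcal{M}(u)>0$ (so Theorem \ref{thm:M_crit_point} applies) is immediate from $m\geq C(k)>0$, but proving that the minimiser is actually \emph{collision-free} and stays in the interior away from $\ell$ (hence is a genuine solution of \eqref{eq:fixed_ends_prob} with winding number exactly $k$) is deferred to the subsequent analysis in Section \ref{sec:fix_ends} and Appendix \ref{sec:appA}; for the present existence statement no such regularity is asserted, so the proof ends here.
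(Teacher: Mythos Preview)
Your argument is correct and is exactly the approach the paper intends: the text immediately preceding the theorem states that Remark~\ref{rem:H_weak_closed} together with Proposition~\ref{prop:M_prop} ``allows us to apply the direct method in the calculus of variations,'' and gives no further proof. Your write-up spells out precisely this standard scheme (bounded minimising sequence via coercivity, weak subsequential limit in the weakly closed class $\mathcal H^k_{z_0z_1}$, and weak lower semicontinuity to conclude), and your remark that collision-freeness is deferred to Theorem~\ref{thm:No_Coll} is also in line with the paper's structure.
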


The minimiser of Theorem \ref{thm: exsistence_minimizer} needs not to be unique; however, all minimisers of $\mathcal M$ in $\mathcal H_{z_0z_1}^k$ share certain properties. For instance, they are collision-free. To prove this, we introduce the notion of Jacobi length (see \cite{ambrosetti2012periodic}).  

\begin{defi}
    For a collision-free curve $u \in \hat{\mathcal H}^k_{z_0 z_1}$ we define the \emph{Jacobi length} as
    \[
    \mathcal{L}(u) = \int_{0}^{1} |\dot{u}(s)|\sqrt{h - V_{\beta}(u(s))}\, ds.
    \]
\end{defi} 

Jacobi length and the Maupertuis' functional are closely related. By the Cauchy--Schwarz inequality,  
\begin{equation}\label{eq:L_M_ineq}
    \mathcal{L}(u) \leq \sqrt{\int_{0}^{1} |\dot{u}(s)|^2 ds}\cdot \sqrt{\int_{0}^{1}(h- V_{\beta}(u(s))) ds} =\sqrt{2\mathcal{M}(u)},
\end{equation}
with equality iff there exists $m\in \R$ such that
\begin{equation}
        |\dot u(s)|^2 = m \big(h-V_\beta(u(s))\big) 
\end{equation}
for almost every $s\in[0,1]$.  

\noindent
For a collision-free curve, $\mathcal L(u)$ can be interpreted as its length in the Jacobi metric induced by $V_\beta$; in particular, it is related to the pericentre of the trajectory.  

\begin{lemma}
    \label{lem: Jacobi_lowerbound}
    For every $u \in \hat{\mathcal H}_{z_0 z_1}$,
    \begin{equation}\label{eq:pericentre}
    \min_{s \in [0,1]}| u(s)| >   L\exp{\!\left(-\frac{\mathcal{L}(u)}{\sqrt{\beta}}\right)}, 
    \end{equation}
    where $L=\mathrm{dist}(0, \ell)$. 
\end{lemma}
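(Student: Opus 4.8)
The plan is to exploit the lower bound $h - V_\beta(z) \ge \beta/|z|^2$ in the integrand defining $\mathcal L(u)$, which lets me compare the Jacobi length to a logarithmic quantity in the radial variable. Concretely, for any collision-free $u \in \hat{\mathcal H}_{z_0z_1}$, since $h>0$ and $1/|u(s)|>0$,
\[
\mathcal L(u) = \int_0^1 |\dot u(s)|\sqrt{h - V_\beta(u(s))}\, ds \;\ge\; \int_0^1 |\dot u(s)|\,\frac{\sqrt{\beta}}{|u(s)|}\, ds \;=\; \sqrt\beta \int_0^1 \frac{|\dot u(s)|}{|u(s)|}\, ds.
\]
The key step is then to bound $\int_0^1 |\dot u(s)|/|u(s)|\, ds$ from below by $|\log(|u(s^\ast)|/L)|$, where $s^\ast \in [0,1]$ is a point at which $|u|$ attains its minimum $\rho := \min_{s}|u(s)|$. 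Here I would use that $|u|$ is absolutely continuous, that $\big|\frac{d}{ds}|u(s)|\big| \le |\dot u(s)|$ almost everywhere, and that $u(0)=z_0 \in \ell'$ so $|u(0)| \ge L$ (since $\ell = \{y=-L\}$ and $0$ has distance $L$ from $\ell$). Integrating the logarithmic derivative along the subinterval between $0$ and $s^\ast$:
\[
\int_0^1 \frac{|\dot u(s)|}{|u(s)|}\, ds \;\ge\; \left|\int_0^{s^\ast} \frac{d}{ds}\log|u(s)|\, ds\right| \;=\; \left|\log|u(s^\ast)| - \log|u(0)|\right| \;=\; \log\frac{|u(0)|}{\rho} \;\ge\; \log\frac{L}{\rho},
\]
using $\rho \le L$. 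Combining the two displays gives $\mathcal L(u) \ge \sqrt\beta\,\log(L/\rho)$, i.e. $\rho \ge L\exp(-\mathcal L(u)/\sqrt\beta)$; the inequality is strict because, if $\rho < L$, one can find an interval on which $|u|$ genuinely decreases and the bound $|\frac{d}{ds}|u|| \le |\dot u|$ combined with $h - V_\beta > \beta/|u|^2$ (the latter strict since $h, 1/|u| > 0$) is not saturated, while if $\rho = L$ the claimed bound $\rho > L e^{-\mathcal L/\sqrt\beta}$ is immediate since $\mathcal L(u) > 0$ for any non-constant admissible $u$.

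The main obstacle I anticipate is handling the case where $u$ passes through the minimum of $|u|$ in a delicate way — e.g. if $|u|$ is not monotone on $[0,s^\ast]$ — but this is resolved by the absolute-value estimate above, which only needs the total variation of $\log|u|$ to dominate the net change; no monotonicity is required. A second, minor technical point is the endpoint regularity: since $u \in H^1$ is collision-free, $|u(s)|$ is bounded away from $0$, so $\log|u(s)|$ is itself $H^1$ and the fundamental theorem of calculus applies without subtlety. One should also note that the statement is vacuous-looking but not vacuous: it is exactly the estimate needed in Section \ref{sec:fix_ends} to show that minimisers of $\mathcal M$ (whose Jacobi length is controlled, via \eqref{eq:L_M_ineq}, by $\sqrt{2\mathcal M}$, and hence by the value on a comparison path) stay uniformly away from the origin, hence are collision-free, provided $\beta$ is small relative to that comparison value.
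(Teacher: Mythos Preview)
Your proof is correct and follows essentially the same approach as the paper: both exploit the strict lower bound $h-V_\beta(z)>\beta/|z|^2$ to compare $\mathcal L(u)$ with the total variation of $\sqrt{\beta}\log|u(s)|$ between the pericentre and an endpoint on $\ell$. The only cosmetic differences are that the paper packages the comparison via the auxiliary function $U(z)=\sqrt{\beta}\log|z|$ (with $|\nabla U|^2=\beta/|z|^2$) and integrates on $[\tilde s,1]$ using $|u(1)|\ge L$, whereas you work directly with $\log|u|$ on $[0,s^\ast]$ using $|u(0)|\ge L$; your treatment of the strict inequality is also somewhat more explicit than the paper's.
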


\begin{proof}[Proof of Lemma \ref{lem: Jacobi_lowerbound}]
Let $\tilde{s} \in [0,1]$ be such that $|u(\tilde{s})| = \min_{s \in [0,1]} |u(s)|$.  
Define $U:\R^2\setminus\{0\}\to \R$ by $U(z)=\sqrt{\beta}\log(|z|)$. Then 
\[
- V_{\beta}(z) > \frac{\beta}{|z|^2} = | \nabla U(z)|^2, \quad z\neq 0.
\]
Hence,
\begin{align*}
    \mathcal{L}(u) &> \int_{\tilde{s}}^{1}  \left|\frac{d}{ds} U (u(s)) \right| ds 
    = \sqrt{\beta} \log \frac{|u(1)|}{|u(\tilde{s})|}. 
\end{align*}
Since $u(1)=z_1\in \ell$, we obtain
\[
\mathcal L(u)>\sqrt{\beta}\log\!\left(\frac{L}{|u(\tilde s)|}\right),
\]
which is equivalent to \eqref{eq:pericentre}.
\end{proof}

\begin{theorem}\label{thm:No_Coll}
    Let $u$ be a minimiser of $\mathcal{M}$ in $\mathcal H^k_{z_0z_1}$. Then $u$ is collision-free. 
\end{theorem}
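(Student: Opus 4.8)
The strategy is the classical one: argue by contradiction, assuming a minimiser $u$ has a collision, and construct a competitor $\tilde u$ in the same class $\mathcal H^k_{z_0z_1}$ with strictly smaller Maupertuis value. The two ingredients I would combine are (a) the Jacobi-length lower bound of Lemma \ref{lem: Jacobi_lowerbound}, which forces any collision-free curve to have $\mathcal L(u)$ bounded below independently of how close it approaches $0$ — wait, actually it goes the other way: a curve with pericentre tending to $0$ must have $\mathcal L$ tending to $\infty$, hence $\mathcal M$ tending to $\infty$ by \eqref{eq:L_M_ineq}; and (b) the fact that a genuinely collisional path (pericentre exactly $0$) has $\mathcal M(u)=+\infty$ anyway because $\int_0^1 V_\beta(u(s))\,ds$ diverges (the $\beta/|u|^2$ term is not integrable along a transversal collision). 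So the real content is: a minimiser cannot be collisional because collisional paths have infinite action, and the infimum is finite (it is $\leq \mathcal M$ of any fixed collision-free competitor in $\mathcal H^k_{z_0z_1}$, e.g.\ a suitable arc of $\hat{\mathcal Z}$ concatenated with boundary segments, giving a finite value).

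Concretely, the steps I would carry out are: (i) Exhibit at least one element of $\hat{\mathcal H}^k_{z_0z_1}$ with finite $\mathcal M$, so that $\inf_{\mathcal H^k_{z_0z_1}}\mathcal M<\infty$; this uses that $z_0,z_1\in\ell'$ and that one can join them inside $D$ by a collision-free $H^1$ curve of winding number $k$ avoiding the origin (spiralling $k$ times around $0$ at bounded distance). (ii) Show that if $u\in Coll_{z_0z_1}$, i.e.\ $u(s_*)=0$ for some $s_*$, then $\mathcal M(u)=+\infty$: since $u\in H^1$, near $s_*$ we have $|u(s)|\le C|s-s_*|^{1/2}$, hence $\int_0^1 |u(s)|^{-2}\,ds=+\infty$ unless $\dot u\equiv 0$ near $s_*$, which is incompatible with $u(s_*)=0$ and $z_0,z_1\neq 0$; in the degenerate case $\|\dot u\|_2=0$ the curve is constant and cannot have $\mathrm{Ind}=k\neq 0$, so either way $\mathcal M(u)=+\infty$. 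Combining (i) and (ii), a minimiser $u$ (which exists by Theorem \ref{thm: exsistence_minimizer}) must lie in $\hat{\mathcal H}^k_{z_0z_1}$, i.e.\ be collision-free.

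Alternatively — and this is probably the version the authors intend, given that Lemma \ref{lem: Jacobi_lowerbound} and Appendix \ref{sec:appA} are flagged — one proves that a minimiser of $\mathcal M$ is also a minimiser of the Jacobi length $\mathcal L$ in the same homotopy class (this is the ``more general result'' promised for the appendix: on the class $\hat{\mathcal H}^k_{z_0z_1}$, $\inf\mathcal L = \sqrt{2\inf\mathcal M}$ and minimisers correspond under reparametrisation, since the Cauchy–Schwarz equality case in \eqref{eq:L_M_ineq} is achieved precisely by the natural Jacobi-arclength parametrisation). Then a collisional minimiser of $\mathcal L$ would be a minimising Jacobi geodesic passing through $0$; but by Lemma \ref{lem: Jacobi_lowerbound} any collision-free curve near such a path can be truncated and rerouted around $0$ with a controlled, in fact negative, change in Jacobi length — more precisely, one shows $\inf_{\hat{\mathcal H}^k}\mathcal L$ is not attained on the boundary $Coll_{z_0z_1}$ because the local contribution to $\mathcal L$ of an arc entering a small ball $B_\rho(0)$ and returning is bounded below by a quantity that stays positive as $\rho\to 0$ while it can be replaced by a short detour along $\partial B_\rho$ of length $O(\rho|\log\rho|)\to 0$ — wait, that detour has length $O(\sqrt\beta\,|\log\rho|)$ in the Jacobi metric, which does \emph{not} go to zero, so one must instead compare total lengths of the two homotopic arcs and use that the straight-in-straight-out collision arc is strictly longer than the optimal detour at the same winding level. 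This comparison, making the strict inequality uniform, is the main obstacle; I expect the cleanest route is still the action-infinity argument (i)–(ii) above, with the Jacobi-length picture invoked only to identify the limiting geometry and to justify that minimisers are honest geodesics away from collisions.
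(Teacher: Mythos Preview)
Your main argument via (i)--(ii) is correct: a collision-free competitor in $\hat{\mathcal H}^k_{z_0z_1}$ shows $\inf_{\mathcal H^k}\mathcal M<\infty$, and for any $u\in Coll_{z_0z_1}$ with $u(s_*)=0$ the H\"older bound $|u(s)|\le\|\dot u\|_2\,|s-s_*|^{1/2}$ forces $\int_0^1\beta|u|^{-2}\,ds=+\infty$, hence $\mathcal M(u)=+\infty$ (the case $\|\dot u\|_2=0$ being ruled out since $u(0)=z_0\neq 0$). This is a genuinely different route from the paper's, which never evaluates $\mathcal M$ on a collisional path directly. Instead the paper combines Lemma~\ref{lem: Jacobi_lowerbound} with \eqref{eq:L_M_ineq} to obtain the uniform pericentre estimate $\min_s|v(s)|>L\exp\bigl(-\sqrt{2\mathcal M(v)/\beta}\bigr)$ for every collision-free $v$, and then passes to the uniform limit along a minimizing sequence $\{u_n\}\subset\hat{\mathcal H}^k_{z_0z_1}$: since $\mathcal M(u_n)\to\mathcal M(u)<\infty$, the pericentres of the $u_n$ stay uniformly away from $0$, and so does that of the limit $u$. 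Your approach is more elementary and does not need Lemma~\ref{lem: Jacobi_lowerbound} at all; the paper's route, on the other hand, yields a quantitative lower bound on the minimiser's pericentre in terms of its action. Note also that your observation~(ii) is precisely what justifies the paper's tacit assumption that a minimizing sequence can be taken inside $\hat{\mathcal H}^k_{z_0z_1}$ with limit value $\mathcal M(u)$. Your third paragraph (Jacobi rerouting) is unnecessary here, and you were right to fall back on (i)--(ii); the detour estimate there is slightly off---the Jacobi length of a circular arc on $\partial B_\rho(0)$ is $O(\sqrt\beta)$, with no $|\log\rho|$ factor---but your conclusion that it does not vanish as $\rho\to 0$ is correct.
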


\begin{proof}
Since $\mathcal M(u)<\infty$, Lemma \ref{lem: Jacobi_lowerbound} and inequality \eqref{eq:L_M_ineq} imply that for every $v\in \hat{\mathcal H}_{z_0z_1}$,
\[
\min_{s\in[0,1]} |v(s)|>L \exp\!\left(-\sqrt{\frac{2\mathcal M(v)}{\beta}}\right). 
\]
Take a minimizing sequence $\{u_n\}\subset\hat{\mathcal H}^k_{z_0z_1}$. Then $\mathcal{M}(u_n)\to \mathcal{M}(u)$ and $u_n\rightharpoonup u$ in $H^1$, hence $u_n\to u$ uniformly. Consequently, 
\[
\min_{s\in[0,1]} |u(s)|=\lim_{n\to\infty}\min_{s\in[0,1]}|u_n(s)|\geq \lim_{n\to\infty}\mathcal M(u_n)=\mathcal M(u)>0, 
\]
so $u$ is collision-free.
\end{proof}

Since any minimiser $u$ of $\mathcal{M}$ belongs to $\hat{\mathcal H}^k_{z_0 z_1}$, the Jacobi length $\mathcal{L}(u)$ is well-defined. Following the reasoning in Appendix A, Proposition \ref{prop:minML}, one can further show that minimisers of $\mathcal M$ also minimise $\mathcal L$.  

\begin{cor}\label{cor:minMminL}
    The minimiser of Theorem \ref{thm: exsistence_minimizer} also minimises $\mathcal L$ on the same set.  
\end{cor}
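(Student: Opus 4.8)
The plan is to play off the Cauchy--Schwarz bound \eqref{eq:L_M_ineq} against the reparametrisation invariance of the Jacobi length. Let $u^\ast$ be a minimiser of $\mathcal M$ on $\mathcal H^k_{z_0z_1}$ as produced by Theorem \ref{thm: exsistence_minimizer}. By Theorem \ref{thm:No_Coll} it is collision-free, so $u^\ast\in\hat{\mathcal H}^k_{z_0z_1}$ and $\mathcal L(u^\ast)$ is well defined; it is on this set that I will show $\mathcal L$ attains its minimum at $u^\ast$. The ``trivial half'' of the argument is the following: for any collision-free competitor $v\in\hat{\mathcal H}^k_{z_0z_1}$ and any curve $\tilde v\in\mathcal H^k_{z_0z_1}$ one has, by \eqref{eq:L_M_ineq} and the minimality of $u^\ast$,
\[
\mathcal L(u^\ast)^2 \;\le\; 2\,\mathcal M(u^\ast) \;\le\; 2\,\mathcal M(\tilde v).
\]
Hence it suffices to exhibit, for each such $v$, a competitor $\tilde v\in\hat{\mathcal H}^k_{z_0z_1}$ with $\mathcal L(\tilde v)=\mathcal L(v)$ and $2\,\mathcal M(\tilde v)=\mathcal L(\tilde v)^2$, for then the display gives $\mathcal L(u^\ast)^2\le\mathcal L(v)^2$ and we are done.

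The curve $\tilde v$ is obtained from $v$ by reparametrising it at constant Jacobi speed: since $v$ is collision-free and $D$ is compact, the factor $h-V_\beta(v(\cdot))$ is bounded and bounded below by $h>0$, so $|\dot v(s)|\sqrt{h-V_\beta(v(s))}\in L^1(0,1)$, and one can reparametrise so that $|\dot{\tilde v}(s)|^2 = m\,(h-V_\beta(\tilde v(s)))$ for a.e.\ $s$, with $m=\mathcal L(v)^2>0$ (note $\mathcal L(v)>0$, since $k\neq 0$ forces $v$ to be non-constant, whence $\int_0^1|\dot v|>0$). This is precisely the equality case in \eqref{eq:L_M_ineq}, so $2\,\mathcal M(\tilde v)=\mathcal L(\tilde v)^2$; and reparametrisation leaves the image, the endpoints, the collision-free character and the winding number of the curve unchanged, so $\tilde v\in\hat{\mathcal H}^k_{z_0z_1}$, while $\mathcal L(\tilde v)=\mathcal L(v)$ by invariance of the length integral. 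Chaining this with the display above yields $\mathcal L(u^\ast)\le\mathcal L(v)$ for every $v\in\hat{\mathcal H}^k_{z_0z_1}$, which is the claim.

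The main obstacle is exactly the reparametrisation step: one must check that $\tilde v$ is a genuine $H^1$ curve in the same class even when $|\dot v|$ vanishes on a set of positive measure or $v$ fails to be injective. This is the content of the more general statement in Appendix \ref{sec:appA}, Proposition \ref{prop:minML}, which I would simply invoke: one collapses the maximal subintervals on which $v$ is constant, reparametrises what remains by normalised Jacobi arc length, and verifies that the resulting map is absolutely continuous with $L^2$ derivative and unchanged winding number. All the remaining ingredients --- the Cauchy--Schwarz inequality \eqref{eq:L_M_ineq}, the minimality of $u^\ast$ from Theorem \ref{thm: exsistence_minimizer}, and the collision-freeness from Theorem \ref{thm:No_Coll} --- are already available, so no further work is needed.
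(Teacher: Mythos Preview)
Your proposal is correct and follows essentially the same approach as the paper, which simply defers to Appendix~\ref{sec:appA}: exploit the Cauchy--Schwarz inequality \eqref{eq:L_M_ineq}, reparametrise an arbitrary competitor so that equality holds there (Lemmas~\ref{lem:NotZeroAE} and~\ref{lem:L2equal2M}), and compare. One minor quibble: the reparametrisation achieving equality in \eqref{eq:L_M_ineq} is the one with $|\dot{\tilde v}|^2 = m\,(h-V_\beta(\tilde v))$, which is \emph{not} ``constant Jacobi speed'' (that would be $|\dot{\tilde v}|\sqrt{h-V_\beta(\tilde v)}=\text{const}$); your formula is the right one, only the label and the specific value of $m$ are off, and neither affects the argument.
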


\medskip

We now pass from minimisers of $\mathcal M$ to solutions of \eqref{eq:differential}, with prescribed endpoints in a compact subset of $\ell$ and winding number $k\neq 0$. In view of Theorems \ref{thm:M_crit_point}, \ref{thm: exsistence_minimizer} and \ref{thm:No_Coll}, it suffices to prove that a minimiser of $\mathcal M$ in $\mathcal H_{z_0z_1}^k$ does not intersect $\partial D$, except at its endpoints. This requires assuming $\beta$ sufficiently small.  

\begin{prop}
    There exists $\bar\beta=\bar\beta(L, h, \alpha)>0$ such that for every $\beta<\bar\beta$, $z_0, z_1\in \ell'$, and $k\in\mathbb Z\setminus\{0\}$, any minimiser of $\mathcal L$ in $\mathcal H^k_{z_0z_1}$ does not touch $\partial D$. 
\end{prop}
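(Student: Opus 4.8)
The plan is to separate the boundary $\partial D = \ell' \cup \hat{\mathcal{Z}}$ into its two pieces and to show that a minimiser of $\mathcal L$ in $\mathcal H^k_{z_0z_1}$ can touch neither, provided $\beta$ is small relative to $L, h, \alpha$. First I would set up the strategy: suppose a minimiser $u$ touches $\partial D$ at some interior parameter $s^\ast \in (0,1)$ with $u(s^\ast) \ne z_0, z_1$. The idea is to derive a contradiction by constructing a competitor curve $\tilde u \in \mathcal H^k_{z_0z_1}$ with strictly smaller Jacobi length — either by a local ``rounding the corner'' perturbation near the contact point, or, when the contact is with $\hat{\mathcal{Z}}$, by exploiting that $\hat{\mathcal{Z}}$ itself is (a piece of) a solution and that two distinct solutions of the same mechanical system at the same energy meeting tangentially would coincide, while a transversal crossing lets one shortcut across $\mathcal P$. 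The winding number $k$ must be preserved by the perturbation, which is automatic for sufficiently $C^0$-small, compactly supported deformations that stay away from the origin (and the collision-free lower bound of Lemma \ref{lem: Jacobi_lowerbound} guarantees such a neighbourhood of $0$ is avoided).

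Next I would treat the contact with $\ell'$: here the relevant comparison is with the Keplerian case $\beta = 0$, where the geometry of hyperbolic arcs is explicit. For $\beta = 0$ one knows that a length-minimising arc in $\overline{\mathcal{P}}$ with endpoints on $\ell$ and winding number $k$ stays strictly inside $\mathcal P$ away from its endpoints — this follows because the Jacobi metric $\sqrt{h - V_{\text{kep}}}\,|dz|$ is, near $\ell$ (which is far from $0$), a small perturbation of a flat metric, so a minimiser that dips down to touch $\ell$ could be reflected or shortcut to decrease length, using strict convexity-type arguments for geodesics hitting a totally geodesic-ish boundary; alternatively one invokes that the minimiser is a genuine Keplerian arc and checks directly that such arcs with the prescribed topological type, constrained to $\ell'$ which lies in a fixed compact $\tilde\ell$, have a definite clearance from $\ell$. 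Then I would transfer this to $\beta > 0$: by Corollary \ref{cor:minMminL} and the bound $\mathcal L(u) \le \mathcal L(\hat{\mathcal{Z}}|_{[t_0,t_1]})$ or any fixed competitor, the minimisers have uniformly bounded Jacobi length, hence live in a fixed compact region of $\mathcal{P}$ bounded away from $0$ (again by Lemma \ref{lem: Jacobi_lowerbound}); on such a region $V_\beta \to V_{\text{kep}}$ in $C^2$ as $\beta \to 0$, so the Jacobi metrics converge, and a standard continuity/compactness argument shows that for $\beta < \bar\beta$ the $\beta$-minimisers inherit the clearance property from the $\beta = 0$ ones — if not, a sequence $\beta_n \to 0$ with minimisers $u_n$ touching $\partial D$ would, up to subsequence, converge uniformly to a $\beta=0$ minimiser touching $\partial D$, contradicting the Keplerian case.

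The contact with the arc $\hat{\mathcal{Z}}$ I would handle by a variational exchange/cut-and-paste argument: if $u$ and $\hat{\mathcal Z}$ meet at an interior point, then concatenating the shorter of the two sub-arcs on each side produces a competitor of the same winding number with $\mathcal L$ no larger, with equality forcing $u$ to coincide with a sub-arc of $\hat{\mathcal Z}$ on an interval, which by unique continuation for the ODE \eqref{eq:differential} (solutions of a second-order ODE agreeing on an interval agree everywhere they are defined) forces $u \equiv \hat{\mathcal{Z}}$, incompatible with $u$ having endpoints $z_0, z_1$ in the interior of $\ell'$ and the prescribed orientation/winding data; a corner at the junction would not be $\mathcal L$-stationary, giving a strict decrease and hence a genuine contradiction. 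The main obstacle I expect is making the ``clearance from $\ell$'' statement for $\beta = 0$ fully rigorous and uniform over all $z_0, z_1 \in \tilde\ell$ and all $k \ne 0$: one must rule out that, as $|k| \to \infty$, minimising arcs are forced arbitrarily close to $\ell$, and one must control the behaviour near the endpoints $z_0, z_1 \in \ell'$ themselves, where the arc does meet $\ell$ — here the right statement is that the arc leaves $\ell$ transversally (tangency at an endpoint would again violate minimality via a reflection argument) and then the topological constraint $\operatorname{Ind}(u) = k \ne 0$ prevents it from returning, so that the ``does not touch $\partial D$ except at endpoints'' conclusion is the correct one and matches the hypotheses needed in Section \ref{sec:symDyn}.
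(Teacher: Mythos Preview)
Your treatment of contact with $\hat{\mathcal Z}$ is workable, though more laboured than the paper's: the paper first invokes the theory of geodesics with obstacles \cite{marino1983geodesics} to conclude that any contact of a Jacobi minimiser with $\partial D$ must be \emph{tangential}, and then tangential contact with $\hat{\mathcal Z}$ is ruled out in one line by Cauchy uniqueness for the ODE (same position, same velocity direction, same energy $\Rightarrow$ same orbit). Your cut-and-paste argument eventually reaches the same conclusion but takes a detour.

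The real gap is in your handling of $\ell'$. Your plan is to establish clearance from $\ell'$ at $\beta=0$ and then transfer it to small $\beta$ by a compactness argument along $\beta_n\to 0$. Both halves of this fail. First, your claim that the minimisers are uniformly bounded away from the origin relies on Lemma~\ref{lem: Jacobi_lowerbound}, but that bound is $\min_s|u(s)|>L\exp(-\mathcal L(u)/\sqrt\beta)$, which degenerates to $0$ as $\beta\to 0$; so you have \emph{no} uniform-in-$\beta$ pericentre control, and hence no region on which $V_\beta\to V_{\text{kep}}$ in $C^2$ uniformly along the minimisers. Second, your Keplerian base case is empty for $|k|\geq 2$: positive-energy Kepler orbits are hyperbolic branches and cannot wind around the centre more than once, so minimisers of $\mathcal L$ in $\mathcal H^k_{z_0z_1}$ for $\beta=0$ and $|k|\geq 2$ are necessarily collisional, and no clearance statement is available to inherit. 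This is exactly the point the paper flags in the Introduction: adding $-\beta/r^2$ is \emph{not} a perturbation of Kepler in this geometry, because trajectories may pass arbitrarily close to the origin.

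The paper sidesteps all of this by a different reduction. After the tangency reduction, one must only rule out solutions of the ODE that are \emph{tangent to $\ell'$} at some point. Such orbits have $|z_0|\geq L$ and $|v_0|^2=2(h-V_\beta(z_0))$, hence angular momentum $C$ bounded away from zero \emph{uniformly in $\beta$ small}; this is what gives a genuine, $\beta$-uniform pericentre. The paper then uses the explicit identity $\hat\theta(h,\beta,C)=\dfrac{C}{\sqrt{C^2-2\beta}}\,\hat\theta\bigl(h,0,\sqrt{C^2-2\beta}\bigr)$ for the swept polar angle and the Keplerian estimate $\hat\theta(h,0,C')<\pi$ to conclude that, for $\beta$ below an explicit threshold, no orbit tangent to $\ell'$ can return to $\ell$. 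The key idea you are missing is this shift of viewpoint: analyse the family of \emph{orbits tangent to $\ell'$} (which is a compact family with good $C$-bounds), rather than the family of minimisers (which need not stay away from the origin as $\beta\to 0$).
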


\begin{proof}
Fix $z_0, z_1\in \ell'$, $k\neq 0$, and $\beta>0$, and let $u(s)$ be a minimiser of $\mathcal L$ in $\mathcal H^k_{z_0z_1}$. We already know that $u$ is collision-free. By the classical theory of geodesics with obstacles (see \cite{marino1983geodesics}), $u$ could touch $\partial D$ only tangentially.  

Tangency with the orbit $\hat{\mathcal{Z}}$ is impossible, since $\hat{\mathcal Z}$ is itself a solution of the system with potential $V_\beta$ (Lemma \ref{lem: boundary_existence}), and uniqueness of solutions of ODEs with prescribed initial conditions rules out such occurence.  

We are left to exclude tangency with $\ell'$. To this end, we show that if $\beta$ is sufficiently small, then every solution of \eqref{eq:diffProb} starting on $\ell'$ with initial velocity tangent to $\ell$ does not intersect $\ell$ again (see Figure \ref{fig:initialOrb}).  

Consider the Cauchy problem
\begin{equation}\label{eq:diffProb}
    \begin{cases}
        \ddot z(t)=-\nabla V_\beta(z(t)),\\
        z(0)=z_0\in \ell',\\
        \dot z(0)=v_0 \parallel (1, 0).
    \end{cases}
\end{equation}
Let $\phi$ be the angle between the position vector $z_0$ and the positive direction along $\ell'$ (Figure \ref{fig:initialOrb}). The angular momentum of $z(t)$ is then
\begin{equation}
    C=|z_0||v_0|\sin\phi, 
\end{equation}
which can also be expressed as
\begin{equation}
    C^2=2h L^2+2L\sin\phi+2\beta\sin^2\phi. 
\end{equation}

\begin{figure}
        \centering        
        \begin{overpic}[height=0.2\textheight]{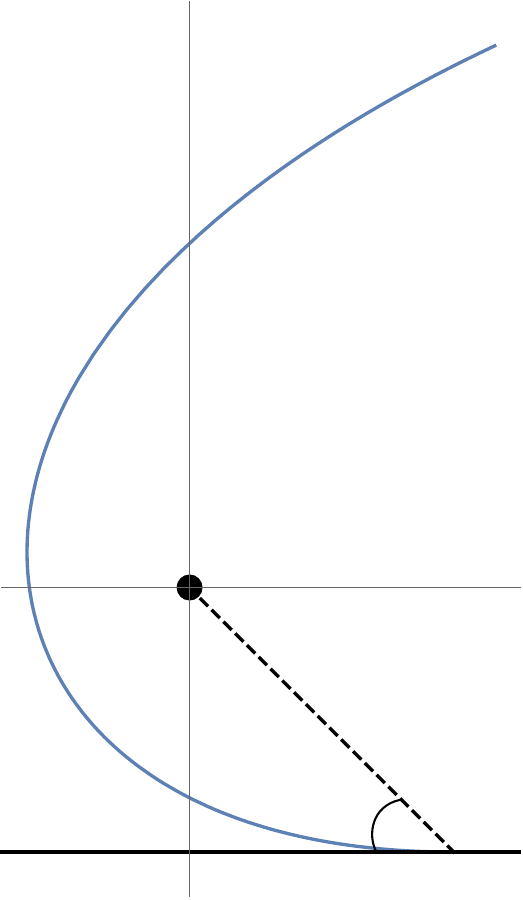}
		\put (33,10) {\rotatebox{0}{$\phi$}}
        \put (60,5) {\rotatebox{0}{$\ell'$}}
        \end{overpic}
\caption{Example of a solution tangent to the line $\ell'$.}
        \label{fig:initialOrb}
\end{figure}

Since $\phi\in(0,\pi)$, we obtain a uniform bound on $C$ depending on $\beta$:  
\[
    2hL^2<C^2<2hL^2+2L^2+2\beta. 
\]
Fix $h>0$ and set $\beta_1=L^2h$. For every $\beta>\beta_1$ and $z_0\in \ell$, the angular momentum of solutions of \eqref{eq:diffProb} tangent to $\ell$ lies in the compact set 
\[
	K := [-\sqrt{4L^2h + 2L}, - \sqrt{2L^2h}] \cup [\sqrt{2L^2h}, \sqrt{4L^2h + 2L}].
\]

The effective potential associated with $V_\beta$ (see \cite{jose1998classical}) is
\[
    V_{eff}(r)=-\frac{1}{r}+\frac{C^2-2\beta}{2r^2}, 
\]
and the radial equations are
\begin{equation}\label{eq:PolEq}
    \ddot{r}=-\dfrac{d V_{eff}(r)}{dr}, \quad \frac{1}{2}\dot r^2=h - V_{eff}(r). 
\end{equation}
Since $C^2>2\beta$, the trajectory has a pericentre at 
\[
r_{min}=\frac{-1+\sqrt{1+2(C^2-2\beta)h}}{2h}
\]
after which it escapes to infinity.  

Let $\hat\theta(h, \beta, C)$ denote the total polar angle swept from pericentre to infinity. From \eqref{eq:PolEq} and angular momentum conservation,
\begin{align}\label{eq:angleBeta}
	\hat{\theta}(h, \beta, C) 
    &= \int_{r_{min}}^{\infty} \frac{C}{\sqrt{2}r^2}\cdot \frac{dr}{\sqrt{\displaystyle h + \frac{1}{r} - \frac{C^2 - 2\beta}{2r^2}}} \\ 
    &=\frac{C}{C'} \cdot \hat\theta(h, 0, C'), \nonumber
\end{align}
where $C' = \sqrt{C^2 - 2\beta}$ and 
\[
\hat\theta(h, 0, C') = \frac{C'}{\sqrt{2}}\int_{r_{min}}^{\infty}\frac{dr}{r^2\sqrt{h+ \dfrac{1}{r} - \dfrac{C'^2}{2r^2}}}
\]
is the maximal central angle of a Keplerian orbit with energy $h$ and angular momentum $C'$.  

Now consider a Keplerian hyperbola with these properties. Let $\delta\in(0,\pi/2)$ be the angle between one of its asymptotes and the conjugate axis (Figure \ref{fig:KepHyp}). Standard formulas in celestial mechanics (see \cite{bate2020fundamentals}, \cite{celletti2010stability}) give 
\[
\sin\delta=\frac{1}{e},\quad e=\sqrt{1+2hC'^2}, 
\]
where $e$ is the eccentricity.  

\begin{figure}
        \centering        
        \begin{overpic}[height=0.2\textheight]{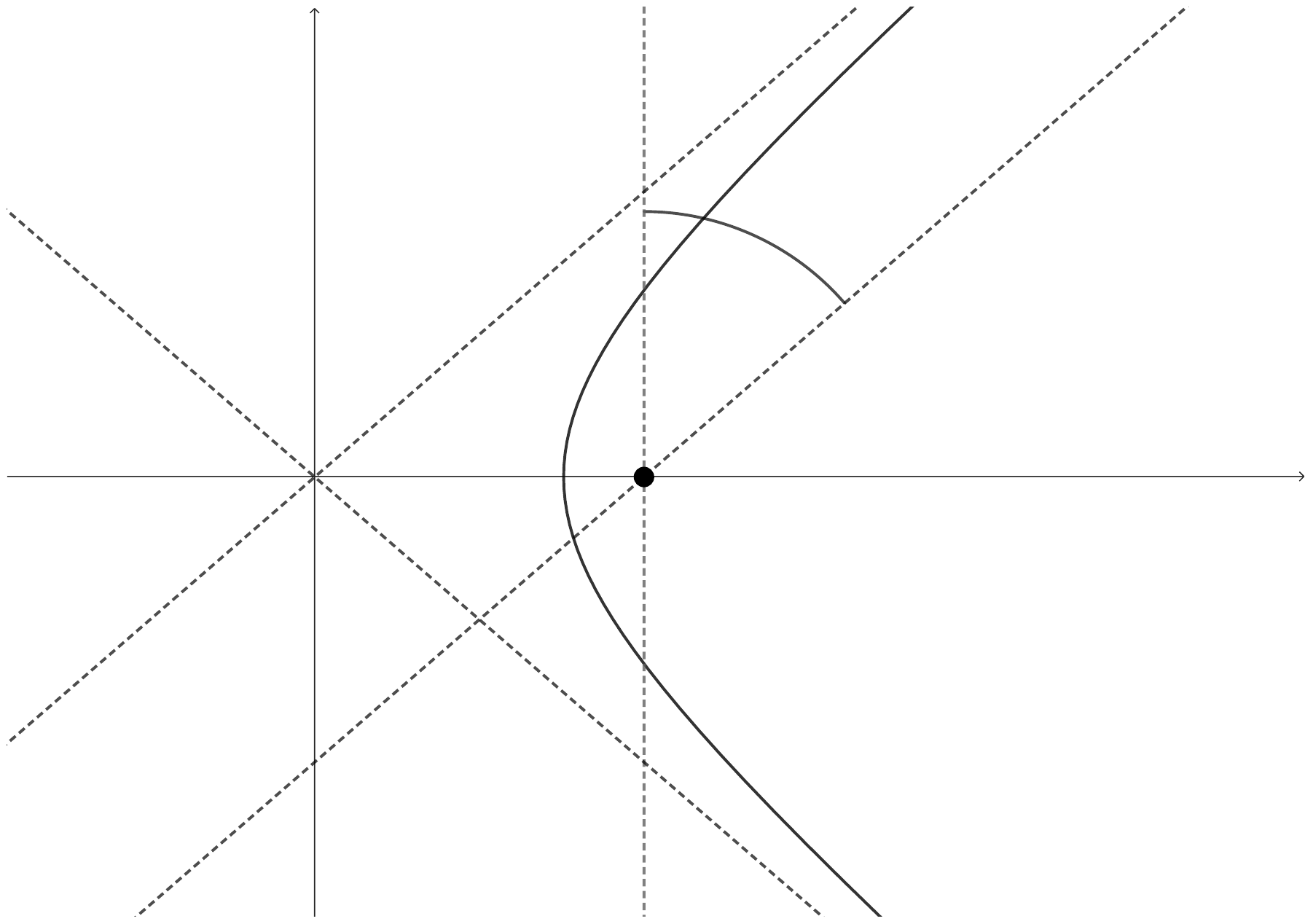}
		\put (55, 45) {\rotatebox{0}{$\delta$}}
        \end{overpic}
\caption{Keplerian hyperbola. The angle $\delta$ is defined between one asymptote and the conjugate axis.}
        \label{fig:KepHyp}
\end{figure}

For every $C\in K$, one has $\hat \theta(h, 0, C')\in (\pi/2, \pi)$. Thus, there exists $\epsilon\in(1/2,1)$ such that, for all $C\in K$ and $\beta<\beta_1$, 
\[
    \frac{\pi}{2}<\hat\theta(h, 0, C')<\epsilon \pi. 
\]
Moreover, for any $\epsilon'\in(0,\epsilon)$, there exists $\beta_2\in(0, \beta_1)$ such that, for $\beta\in(0,\beta_2)$ and all $C\in K$,
\[
    1<\frac{C}{C'}<\frac{1}{\epsilon'}.
\]
Hence, if $\beta\in(0,\beta_2)$, every orbit tangent to $\ell$ satisfies
\[
    \hat\theta(h, \beta, C)=\frac{C}{C'}\hat\theta(h, 0, C')<\frac{\epsilon}{\epsilon'}\pi<\pi. 
\]

This shows that such orbits cannot intersect $\ell$ again. Finally, let $P$ and $P'$ be the pericentres of $\mathcal O$ (orbit of $V_\beta$) and $\mathcal O'$ (Keplerian orbit with same tangency point), respectively. Clearly, $P'\in \{-L\leq y\leq 0\}$. By continuity of ODE solutions, $\|P-P'\|\to 0$ as $\beta\to 0$. Thus we can choose $\bar\beta \in (0,\beta_2)$ so that for all $\beta<\bar\beta$, $P$ also lies in $\{-L\leq y\leq 0\}$. Since $z_0\in \ell'$ (a compact segment), $\bar\beta$ can be chosen independent of $z_0$. 
\end{proof}

We conclude that, for $\beta$ sufficiently small, minimisers in $\mathcal H_{z_0z_1}^{k}$ do not touch $\partial D$ except at the endpoints. By compactness of $\ell'$, the threshold $\bar\beta$ is uniform in $z_0,z_1$ and in $k\neq 0$.  

Applying Theorem \ref{thm:M_crit_point}, we obtain the following existence result.

\begin{theorem}\label{thm:class_sol_k}
    There exists $\bar\beta=\bar\beta(L, h, \alpha)>0$ such that for every $\beta<\bar\beta$, $z_0, z_1\in \ell'$, and $k\in\mathbb Z$, there exists a classical solution of \eqref{eq:diffProb} with $z(0)=z_0$, $z(T)=z_1$ for some $T>0$, such that $z(t)\in \mathring{D}$ for all $t\in(0, T)$ and $\operatorname{Ind}(z)=k$. 
\end{theorem}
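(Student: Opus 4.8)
The theorem essentially collects the results established so far, so the plan is largely one of assembly; throughout I take $k\in\Z\setminus\{0\}$, the range in which the classes $\mathcal H^k_{z_0z_1}$ and the preceding statements were set up (and the only one needed for the symbolic dynamics of Section \ref{sec:symDyn}). Let $\bar\beta=\bar\beta(L,h,\alpha)>0$ be the threshold produced in the Proposition just above, fix $\beta<\bar\beta$, $z_0,z_1\in\ell'$ and $k\ne0$. First I would apply Theorem \ref{thm: exsistence_minimizer} to get a minimiser $u\in\mathcal H^k_{z_0z_1}$ of $\mathcal M$, recording via Proposition \ref{prop:M_prop}(1) that $\mathcal M(u)\ge C(k)>0$. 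Then Theorem \ref{thm:No_Coll} makes $u$ collision-free, so that $u\in\hat{\mathcal H}^k_{z_0z_1}$, the Jacobi length $\mathcal L(u)$ is well defined, and Corollary \ref{cor:minMminL} shows $u$ also minimises $\mathcal L$ over the same class. Since $\beta<\bar\beta$, the preceding Proposition now prevents $u$ from touching $\partial D$ anywhere except at its endpoints $z_0,z_1$; that is, $u(s)\in\mathring D$ for all $s\in(0,1)$.

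The second step is to promote the constrained minimiser $u$ to a critical point of $\mathcal M$ in the sense required by Theorem \ref{thm:M_crit_point}. On each compact subinterval of $(0,1)$ the curve $u$ stays at positive distance both from $\partial D$ and from the origin, so for every $\varphi\in C_c^\infty((0,1),\R^2)$ the perturbed curve $u+t\varphi$ remains in $D$, avoids $0$, and — since $\hat{\mathcal H}^k_{z_0z_1}$ is $C^0$-open and the winding number is invariant under $C^0$-small deformations with fixed endpoints — still lies in $\mathcal H^k_{z_0z_1}$ for $|t|$ small. Minimality then gives $\tfrac{d}{dt}\mathcal M(u+t\varphi)\big|_{t=0}=0$ for all such $\varphi$, so $u$ is a collision-free critical point of $\mathcal M$ with $\mathcal M(u)>0$. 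Theorem \ref{thm:M_crit_point} then yields, with $\omega^2=\big(\int_0^1(h-V_\beta(u(s)))\,ds\big)\big/\big(\int_0^1|\dot u(s)|^2\,ds\big)$ and $T=1/\omega$, the reparametrisation $z(t)=u(\omega t)$, a classical solution of \eqref{eq:differential} with $z(0)=z_0$, $z(T)=z_1$. A reparametrisation changes neither the image nor the orientation of the curve, hence $z(t)\in\mathring D$ for $t\in(0,T)$ and $\operatorname{Ind}(z)=\operatorname{Ind}(u)=k$; finally, compactness of $\ell'$ and of the admissible range of $\beta$ make $\bar\beta$ uniform in $z_0,z_1$ and $k$, as remarked before the statement.

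I do not expect a serious obstacle here: the genuinely hard point — excluding tangential contact with $\ell'$ by comparison with hyperbolic Keplerian arcs — has already been settled in the preceding Proposition. The one step deserving care is the passage from a minimiser constrained to $\mathcal H^k_{z_0z_1}$ to a critical point of $\mathcal M$: one must check that the constraints ($u\subset D$, $u\ne0$, $\operatorname{Ind}=k$) are all inactive along $u$, so that interior variations are admissible. This is exactly where the interiority $u(s)\in\mathring D$ on $(0,1)$ supplied by the Proposition, the $C^0$-openness of $\hat{\mathcal H}^k_{z_0z_1}$, and the use of test fields with compact support in the open interval $(0,1)$ — which keeps the fixed endpoints sitting on $\partial D$ from contributing any boundary term — come together to license the Euler--Lagrange equation and hence the application of Theorem \ref{thm:M_crit_point}.
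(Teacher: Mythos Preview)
Your proposal is correct and follows essentially the same assembly as the paper: existence of a minimiser (Theorem \ref{thm: exsistence_minimizer}), positivity of $\mathcal M$ (Proposition \ref{prop:M_prop}), collision-freeness (Theorem \ref{thm:No_Coll}), minimality for $\mathcal L$ (Corollary \ref{cor:minMminL}), interiority in $D$ via the preceding Proposition, and finally Theorem \ref{thm:M_crit_point}. If anything, you are more explicit than the paper about why the constrained minimiser is a genuine critical point of $\mathcal M$---the paper simply writes ``Applying Theorem \ref{thm:M_crit_point}, we obtain the following existence result''---and your observation that compactly supported variations stay in $\hat{\mathcal H}^k_{z_0z_1}$ by $C^0$-openness is exactly the missing justification.
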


\section{Construction of Symbolic Dynamics for Boltzmann's Billiard}\label{sec:symDyn}

Trajectories from Theorem \ref{thm:class_sol_k} are the building blocks of our symbolic dynamics, where the symbols correspond to winding numbers. In particular, billiard trajectories will be constructed by concatenating solutions of the mechanical system associated with $V_\beta$ as given in Theorem \ref{thm:class_sol_k}. To decide whether two arcs are reflected into one another, we use the following variational characterization of reflection.  

\begin{lemma}\label{lem:refl_var}
    Let $z_0, z, z_2$ be three points on $\ell'$, connected by a concatenation of two arcs $\mathcal Z_1(\cdot; z_0, z)$ and $\mathcal Z_2(\cdot; z, z_2)$ as in Theorem \ref{thm:class_sol_k}, with $\mathcal Z_1(0; z_0, z)=z_0$, $\mathcal Z_1(T_1; z_0, z)=\mathcal Z_2(0; z, z_2)=z$, and $\mathcal Z_2(T_2; z, z_2)=z_2$ for some $T_1,T_2>0$. Then the concatenation satisfies the reflection law at $z$ if and only if 
\begin{equation}\label{eq:critPoint}
    \partial_z\Big(\mathcal L\big(\mathcal Z_1(\cdot; z_0, z)\big)+\mathcal L\big(\mathcal Z_2(\cdot; z, z_2)\big)\Big)\Big|_{z=z_1}=0. 
\end{equation}
\end{lemma}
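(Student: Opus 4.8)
\textbf{Proof plan for Lemma \ref{lem:refl_var}.}

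The plan is to recognize \eqref{eq:critPoint} as the first-order optimality condition coming from the variational principle behind the Jacobi length, and then to show that this condition coincides with the classical elastic reflection law. First I would recall the relation between the Jacobi length and the dynamics: for an energy-$h$ solution of $\ddot z=-\nabla V_\beta(z)$, the Maupertuis parametrisation satisfies $|\dot z|^2=2(h-V_\beta(z))$, so along the physical arc the Cauchy--Schwarz inequality in \eqref{eq:L_M_ineq} is an equality, and hence $\mathcal L\big(\mathcal Z_i\big)=\int |\dot{\mathcal Z}_i|\sqrt{h-V_\beta(\mathcal Z_i)}\,ds$ is precisely the length of the arc in the Jacobi metric $g_J=(h-V_\beta(z))\,|dz|^2$. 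By the results of Section \ref{sec:fix_ends} (in particular Corollary \ref{cor:minMminL} and Theorem \ref{thm:class_sol_k}), each arc $\mathcal Z_i(\cdot;z_0,z)$, $\mathcal Z_i(\cdot;z,z_2)$ is a minimiser, hence a $g_J$-geodesic in the interior of $D$, and the function $z\mapsto \mathcal L\big(\mathcal Z_1(\cdot;z_0,z)\big)+\mathcal L\big(\mathcal Z_2(\cdot;z,z_2)\big)$ is the two-point geodesic-distance sum restricted to $z\in\ell'$.

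Next I would differentiate this sum with respect to the free endpoint $z$ along $\ell'$. This is the standard first-variation-of-arclength computation: for a geodesic $\gamma$ in a Riemannian metric $g$ joining a fixed point to a variable endpoint $z$, the differential of its length with respect to $z$ is the $g$-unit tangent covector of $\gamma$ at $z$, pointing \emph{away} from the fixed end. Applying this to both arcs, $\partial_z\big(\mathcal L(\mathcal Z_1)+\mathcal L(\mathcal Z_2)\big)$ equals the sum of the two outgoing $g_J$-unit tangents at $z$, and setting it to zero along the constraint line $\ell'$ forces the tangential (to $\ell'$) components of these two unit vectors to be equal and opposite. Since $g_J$ is conformal to the Euclidean metric, $g_J$-unit vectors are Euclidean vectors of the same Euclidean length $1/\sqrt{h-V_\beta(z)}$, so equality of $g_J$-norms is equivalent to equality of Euclidean norms; thus the condition says exactly that the incoming and outgoing physical velocities at $z$ (which are parallel to these tangents, with magnitude $\sqrt{2(h-V_\beta(z))}$ by energy conservation) have equal tangential components and hence, since $|\dot{\mathcal Z}_1(T_1)|=|\dot{\mathcal Z}_2(0)|$, equal-and-opposite normal components — i.e.\ the elastic reflection law at the wall $\ell$. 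I would write this out by choosing coordinates so that $\ell'$ is horizontal, letting $v^-$ and $v^+$ be the terminal velocity of $\mathcal Z_1$ and the initial velocity of $\mathcal Z_2$ at $z$, and checking that $\partial_z\mathcal L(\mathcal Z_1)=-v^-/|v^-|\cdot(\text{Jacobi factor})$ up to sign bookkeeping, so that \eqref{eq:critPoint} becomes $v^-_{\mathrm{tang}}=v^+_{\mathrm{tang}}$, $v^-_{\mathrm{norm}}=-v^+_{\mathrm{norm}}$.

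The main obstacle is the first-variation formula itself, specifically getting the boundary term right and justifying that there is no interior contribution. The interior term vanishes because each $\mathcal Z_i$ is a critical point (geodesic) of $\mathcal L$ with fixed endpoints, by Corollary \ref{cor:minMminL} and Theorem \ref{thm:class_sol_k} — this is where collision-freeness and the fact that the minimisers stay in $\mathring D$ (not touching $\partial D$) are essential, so that the variation is unobstructed and $\mathcal L$ is smooth near $\mathcal Z_i$. A subtlety to address is differentiability of $z\mapsto \mathcal Z_i(\cdot;z_0,z)$ itself: one should argue, via smooth dependence of solutions of \eqref{eq:diffProb} on endpoint data away from collisions and the implicit function theorem (the arcs are transverse to $\ell$ at $z$, by the angle conditions inherited from $\hat{\mathcal Z}$ and Theorem \ref{thm:class_sol_k}), that the family of minimising arcs can be chosen to depend smoothly on $z\in\ell'$, so that the expression in \eqref{eq:critPoint} makes sense. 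Once this regularity is in place, the computation is the classical one and the equivalence with the reflection law is immediate. I would also note that the converse direction is the same computation read backwards: if the reflection law holds, the two outgoing Jacobi-unit tangents have cancelling tangential parts, so the derivative vanishes.
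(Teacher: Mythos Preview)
Your proposal is correct and follows essentially the same approach as the paper: both identify the endpoint derivative of the Jacobi length with the (Jacobi-)unit tangent of the arc at $z$, so that vanishing of the sum along $\ell'$ is exactly equality of the tangential components of $\dot{\mathcal Z}_1(T_1)$ and $\dot{\mathcal Z}_2(0)$, i.e.\ the reflection law. The paper's proof is simply shorter, quoting the explicit formulas
\[
\partial_z\mathcal L\big(\mathcal Z_1(\cdot;z_0,z)\big)=\sqrt{h-V_\beta(z)}\,\frac{\dot{\mathcal Z}_1(T_1)}{|\dot{\mathcal Z}_1(T_1)|}\cdot(1,0),\qquad
\partial_z\mathcal L\big(\mathcal Z_2(\cdot;z,z_2)\big)=-\sqrt{h-V_\beta(z)}\,\frac{\dot{\mathcal Z}_2(0)}{|\dot{\mathcal Z}_2(0)|}\cdot(1,0)
\]
from \cite[Appendix~A]{barutello2023chaotic}, whereas you obtain the same boundary terms from the Riemannian first-variation-of-arclength formula and the conformality of $g_J$; your added remarks on why the interior term vanishes and on smooth dependence in $z$ are exactly the justifications that the citation is standing in for.
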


\begin{proof}
The concatenation of $\mathcal Z_1$ and $\mathcal Z_2$ satisfies the reflection law at $z_1$ precisely when $\dot{\mathcal Z}_1(T_1)$ and $\dot{\mathcal Z}_2(0)$ have equal components in the direction of $\ell'$. This follows from \eqref{eq:critPoint} and the formulas (see \cite[Appendix A]{barutello2023chaotic})
\begin{equation}
    \begin{aligned}
    \partial_z\mathcal L\!\left(\mathcal Z_1(\cdot; z_0, z)\right)&=\sqrt{V_\beta(z)}\frac{\dot{\mathcal Z}_1(T_1; z_0, z)}{|\dot{\mathcal Z}_1(T_1; z_0, z)|}\cdot(1,0),\\   
    \partial_z\mathcal L\!\left(\mathcal Z_2(\cdot; z, z_2)\right)&=-\sqrt{V_\beta(z)}\frac{\dot{\mathcal Z}_2(0; z, z_2)}{|\dot{\mathcal Z}_2(0; z, z_2)|}\cdot(1,0). 
    \end{aligned}
\end{equation}
\end{proof}

We now construct the symbolic dynamics. Let $X$ denote the set of initial conditions $(z_0, v_0)\in \ell'\times \R^2$ of billiard trajectories associated with $V_\beta$ at energy $h>0$, lying in $D$ and starting from $\ell'$ without collisions.\footnote{At this stage, the set $X$ is defined by an \emph{a posteriori} reasoning on the trajectories generated from it. In the following we show that this definition is well-posed and $X\neq\emptyset$.}  

The first step is to prove the existence of non-collisional periodic trajectories inside $D$, with prescribed winding numbers around $0$.  

Let $\Omega$ be the set of bi-infinite sequences with entries in $\Z\setminus \{0\}$. Following \cite{hasselblatt2003first}, define on $\Omega$ the metric
\[
d(s, \bar{s}):= \sum_{i= -\infty}^{\infty} \frac{\delta(s_i, \bar{s}_i)}{\lambda^{|i|}}, \quad \lambda>3,
\]
where $\delta(i,j)=0$ if $i=j$ and $\delta(i,j)=1$ otherwise.  

The \emph{Bernoulli shift} $\sigma:\Omega \to \Omega$ is defined by 
\[
\sigma((s_i)_i)=(s_{i+1})_i.
\]
This transformation is a homeomorphism; its periodic points are dense in $\Omega$, and $\sigma$ is topologically mixing, hence chaotic (see again \cite{hasselblatt2003first}).  

We next establish a correspondence between sequences in $\Omega$ and billiard trajectories of our system, via a \emph{projection map} $\pi$.  

\begin{defi}\label{def:defPi}
    Let $(z_0, v_0)\in X$, and let $F$ be the first return map on $X$, sending $(z_0, v_0)$ to $(z_1, v_1)$, where the particle next hits $\ell$ and reflects (see Figure \ref{fig:defF}). Suppose the billiard trajectory starting from $(z_0, v_0)$ yields a bi-infinite sequence of reflection points $\{z_i\}_{i\in\Z}\subset\ell'$ with associated winding numbers $\{s_i\}_{i\in\Z}\subset\Z\setminus\{0\}$. We say the orbit starting from $(z_0, v_0)$ \emph{realises} the sequence $(\dots,s_{-1}, s_0, s_1, \dots )\in \Omega$.  

    In this case, the projection map $\pi$ is well defined by
    \[
    \pi(z_0, v_0)=(\dots,s_{-1}, s_0, s_1, \dots ).
    \]
\end{defi}

\begin{figure}
        \centering        
        \begin{overpic}[width=0.9\textwidth]{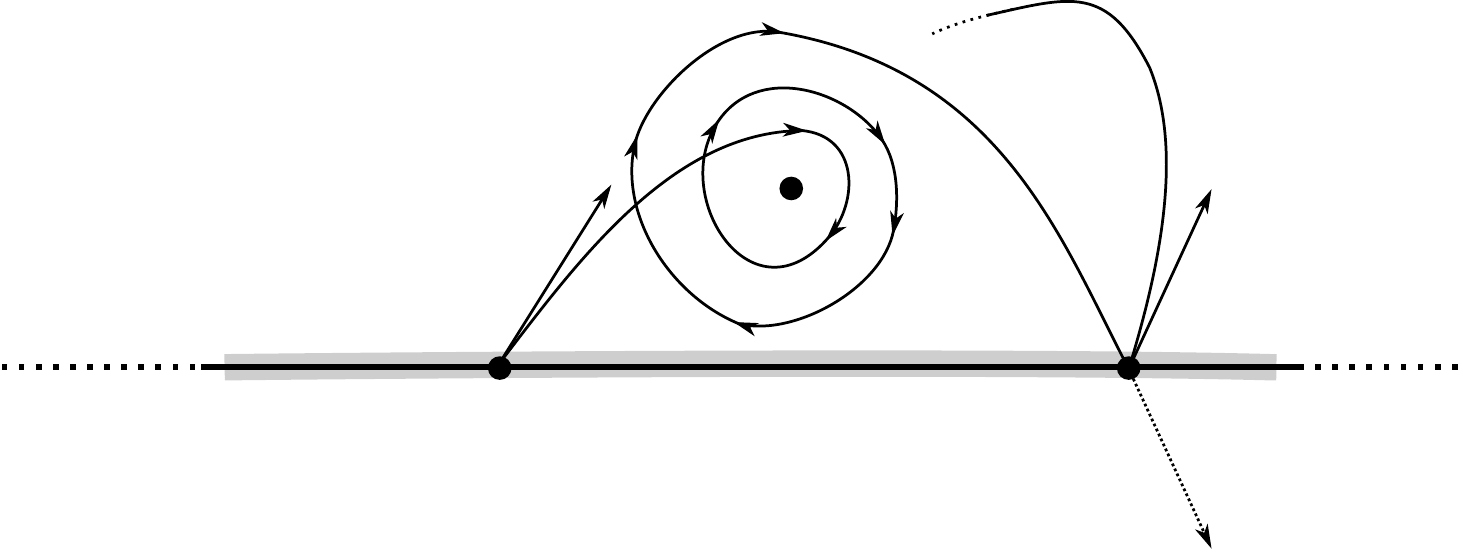}
        \put (-3, 12) {\rotatebox{0}{$\ell$}}
        \put (24,9) {\rotatebox{0}{$\ell'$}}
        \put (33,9) {\rotatebox{0}{$z_0$}}
        \put (36,20) {\rotatebox{65}{$v_0$}}
        \put (75,9) {\rotatebox{0}{$z_1$}}
        \put (81,18) {\rotatebox{70}{$v_1$}}
        \end{overpic}
\caption{First return map $F: (z_0, v_0)\mapsto (z_1, v_1)$ described in Definition \ref{def:defPi}. }
        \label{fig:defF}
\end{figure}

Clearly, $\pi$ is not well defined for every $(z_0,v_0)\in X$, but only for those initial conditions generating billiard trajectories realising sequences in $\Omega$. We therefore seek a subset $X'\subset X$ on which $\pi$ is well defined and yields a symbolic dynamics. We begin by proving that $\pi$ is surjective onto the set of periodic sequences.  

\begin{prop}   \label{prop: periodic_realization}
    Let $s \in \Omega$ be periodic of period $[s]=(s_0, s_1, \dots, s_{n-1})$. If $\beta<\bar\beta$, with $\bar\beta$ as in Theorem \ref{thm:class_sol_k}, then there exists a periodic billiard trajectory at energy $h$ consisting of $n$ arcs, where the $k$-th arc has winding number $s_k$ for $k=0,\dots,n-1$.
\end{prop}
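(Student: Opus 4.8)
The natural strategy is a finite-dimensional variational argument over the positions of the $n$ reflection points. Given the periodic block $[s]=(s_0,\dots,s_{n-1})$, I would consider the configuration space $(\ell')^n$ of tuples $(z_0,\dots,z_{n-1})$ and, for each such tuple, invoke Theorem \ref{thm:class_sol_k} to produce, for each $k$, a minimiser $\mathcal Z_k(\cdot;z_k,z_{k+1})$ of $\mathcal L$ in the class $\mathcal H^{s_k}_{z_k z_{k+1}}$ (indices mod $n$), which is collision-free and stays in $\mathring D$ except at endpoints. Define the ``generating function''
\[
W(z_0,\dots,z_{n-1}) \;=\; \sum_{k=0}^{n-1} \mathcal L\big(\mathcal Z_k(\cdot;z_k,z_{k+1})\big),
\]
the total Jacobi length of the closed broken trajectory. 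A critical point of $W$ on $(\ell')^n$ gives, by Lemma \ref{lem:refl_var} applied at each vertex $z_k$, a concatenation satisfying the reflection law at every reflection point — hence a genuine periodic billiard trajectory at energy $h$ with the prescribed winding numbers. So the whole proposition reduces to showing $W$ attains an interior critical point.

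To get such a critical point I would argue that $W$ attains its minimum over $(\overline{\ell'})^n$, and that the minimum is not attained on the boundary. Continuity of $W$ in the $z_k$ follows from continuous dependence of the variational minimisers on the endpoints (the value $\min \mathcal L$ over $\mathcal H^{s_k}_{z_k z_{k+1}}$ depends continuously on $(z_k,z_{k+1})$, since nearby endpoints give comparable competitor classes and the lower bound of Proposition \ref{prop:M_prop}(1) plus Corollary \ref{cor:minMminL} control things), so a minimum over the compact set $(\overline{\ell'})^n$ exists. The key point is then the boundary behaviour: I must rule out that a minimising configuration has some $z_k$ at an endpoint of $\ell'$, i.e. at $\partial \ell' = \{\hat{\mathcal Z}(t_0),\hat{\mathcal Z}(t_1)\}$. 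Here the idea is that the bounding orbit $\hat{\mathcal Z}$ meets $\ell$ transversally (by the angle conditions in Lemma \ref{lem: boundary_existence}), so if a reflection point slid to an endpoint of $\ell'$ the incoming/outgoing arcs would have to leave $D$ to satisfy the winding constraint, contradicting Theorem \ref{thm:class_sol_k}; more robustly, one shows $W$ strictly decreases under a small inward push of any boundary vertex, using the reflection-derivative formulas of Lemma \ref{lem:refl_var} and the fact that at an endpoint of $\ell'$ the geometry forces the ``reflection defect'' to point strictly inward. An alternative, cleaner route: work on the open set of configurations and show $W$ is coercive in the sense that it blows up (or stays bounded below by strictly more than the infimum over the interior) as any $z_k\to\partial\ell'$, again via the transversality of $\hat{\mathcal Z}$.

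The main obstacle I anticipate is precisely this boundary analysis — ensuring the minimiser of the generating function stays in the interior of $(\ell')^n$. Unlike the fixed-endpoint problem, here the endpoints are free to move along $\ell'$, and $\ell'$ is a segment with genuine endpoints lying on the bounding orbit $\hat{\mathcal Z}$; one must carefully exploit the construction of $\hat{\mathcal Z}$ (its transversal crossing of $\ell$ and the angle inequalities) to block escape to $\partial \ell'$, and this presumably is where the smallness of $\beta$ and the specific choice of $\hat{\mathcal Z}$ re-enter. A secondary, more technical point is establishing genuine continuity (ideally some regularity) of $(z_k,z_{k+1})\mapsto \min\mathcal L$ well enough to differentiate and apply Lemma \ref{lem:refl_var}; this should follow from uniqueness-up-to-properties of minimisers together with the formulas cited from \cite[Appendix A]{barutello2023chaotic}, but it needs to be spelled out. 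Once the interior critical point is secured, periodicity of the trajectory is automatic from the cyclic structure of $W$ and the reflection law holding at all $n$ vertices.
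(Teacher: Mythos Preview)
Your proposal is correct and follows essentially the same route as the paper: define the total Jacobi length $W$ over $(\ell')^n$, invoke compactness to get a minimiser, use Lemma~\ref{lem:refl_var} to identify interior critical points with periodic billiard trajectories, and exclude boundary minimisers via the angle conditions of Lemma~\ref{lem: boundary_existence}. The paper carries out the boundary step exactly as you anticipate---by showing $\partial_i W<0$ at $\hat z_0$ and $\partial_i W>0$ at $\hat z_1$ through a local Euclidean approximation of the Jacobi metric---so your identification of this as the main obstacle and of the relevant tools is spot on.
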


\begin{proof}
By Theorem \ref{thm:class_sol_k}, for any $[z]=(z_0, z_1,\dots, z_{n-1}) \in \ell'^n$, there exists a concatenation $\mathcal{Z}^{[s]}_{[z]}$ of arcs $\mathcal{Z}^{[s]}_{z_k z_{k+1}}$, $k=0,\dots,n-1$, with $z_n=z_0$. Each arc is a collision-free minimiser of $\mathcal{M}$ in $H^{s_k}_{z_k z_{k+1}}$, lies in the interior of $D$, and has winding number $s_k$. Hence it is a classical solution of \eqref{eq:fixed_ends_prob} with endpoints $z_k,z_{k+1}$.  

We seek $[z]\in \ell'^n$ such that the concatenation $\mathcal{Z}^{[s]}_{[z]}$ forms a billiard trajectory. Define the total Jacobi length
\[
W_n^{[s]}([z]) = \sum_{k=0}^{n-1} \mathcal{L}(\mathcal{Z}^{[s]}_{z_k z_{k+1}}).
\]
Although the solutions $\mathcal{Z}^{[s]}_{z_kz_{k+1}}$ are not unique, their Jacobi length is uniquely determined, so $W_n^{[s]}$ is well defined.  

By Lemma \ref{lem:refl_var}, the concatenation is a billiard trajectory if and only if $[z]$ is a critical point of $W_n^{[s]}$. Since $\ell'^n$ is compact, $W_n^{[s]}$ admits a minimiser $[\tilde z]$. To conclude, it suffices to exclude minimisers lying on the boundary of $\ell'^n$, which follows from the next lemma. 
\end{proof}

\begin{lemma}
Let $\partial_{i}W^{[s]}_n$ denote the derivative of $W^{[s]}_n$ with respect to the $i$-th coordinate $(z_i)_x$, $i=1,\dots,n$. Denote the endpoints of $\ell'$ by $\hat z_0=(\hat x_0, -L)$ and $\hat z_1=(\hat x_1, -L)$ (see Figure \ref{fig:hat_z}). Then, for each $i=1,\dots,n$,
\begin{align}
         \partial_{i}W^{[s]}_n|_{z_i= \hat z_0} &<0, \label{eq: W_ineq1}\\
         \partial_{i}W^{[s]}_n|_{z_i= \hat z_1} &>0. \label{eq: W_ineq2}
\end{align}
\end{lemma}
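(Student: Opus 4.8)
The plan is to estimate $\partial_i W_n^{[s]}$ at the endpoints of $\ell'$ using the explicit formula for the derivative of Jacobi length from Lemma \ref{lem:refl_var}. Fix $i\in\{1,\dots,n\}$ and note that only the two arcs incident to $z_i$, namely $\mathcal Z^{[s]}_{z_{i-1}z_i}$ and $\mathcal Z^{[s]}_{z_i z_{i+1}}$, contribute to $\partial_i W_n^{[s]}$. Writing $e_x=(1,0)$ for the positive direction along $\ell'$, the formulas
\[
\partial_z\mathcal L\!\left(\mathcal Z_1(\cdot; z_{i-1}, z)\right)=\sqrt{-V_\beta(z)}\,\frac{\dot{\mathcal Z}_1(T_1)}{|\dot{\mathcal Z}_1(T_1)|}\cdot e_x,\qquad \partial_z\mathcal L\!\left(\mathcal Z_2(\cdot; z, z_{i+1})\right)=-\sqrt{-V_\beta(z)}\,\frac{\dot{\mathcal Z}_2(0)}{|\dot{\mathcal Z}_2(0)|}\cdot e_x
\]
give
\[
\partial_i W_n^{[s]}\big|_{z_i=z}=\sqrt{-V_\beta(z)}\left(\frac{\dot{\mathcal Z}_1(T_1)}{|\dot{\mathcal Z}_1(T_1)|}-\frac{\dot{\mathcal Z}_2(0)}{|\dot{\mathcal Z}_2(0)|}\right)\cdot e_x,
\]
so the sign of $\partial_i W_n^{[s]}$ is governed by the difference of the normalised horizontal velocity components of the incoming and outgoing arcs at $z_i$.

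The next step is to show that when $z_i=\hat z_0$ (the left endpoint of $\ell'$), both arcs incident to it must leave $\hat z_0$ pointing "to the right" in a way that forces the bracketed quantity to be negative; symmetrically at $\hat z_1$ it is positive. Here I would invoke the geometry of the bounding orbit $\hat{\mathcal Z}$ and the definition of $D$: since $\hat z_0=\hat{\mathcal Z}(t_0)$ is an endpoint where $\hat{\mathcal Z}$ meets $\ell$ with $\angle(\dot{\hat{\mathcal Z}}(t_0),(1,0))<\pi/2$ (Lemma \ref{lem: boundary_existence}), any arc in $\mathcal H^k_{\hat z_0 z_{i+1}}$ that stays in $D$ and leaves from $\hat z_0$ must have initial velocity in the closed cone between $\dot{\hat{\mathcal Z}}(t_0)$ and the upward-tangent-to-$\ell$ direction — in particular strictly positive horizontal component, and likewise the arc arriving at $\hat z_0$ must arrive with strictly positive horizontal component after reflection-type considerations. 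Comparing these two cone constraints yields that the normalised outgoing horizontal component is strictly smaller than the normalised incoming one, i.e. \eqref{eq: W_ineq1}; the argument at $\hat z_1$ is the mirror image, using $\angle((-1,0),\dot{\hat{\mathcal Z}}(t_1))>\pi/2$, giving \eqref{eq: W_ineq2}. A subtle point to handle carefully is that the arcs $\mathcal Z^{[s]}_{z_kz_{k+1}}$ are minimisers and need not be unique, so the inequalities must be shown to hold for \emph{every} choice of minimiser; this is fine because $W_n^{[s]}$ is well defined (the Jacobi length is unique) and the velocity directions at the endpoints are determined by $\partial_z\mathcal L$, which depends only on the well-defined quantity $\mathcal L$.

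I expect the main obstacle to be making the cone/geometry argument rigorous: one must rule out the degenerate possibility that an incident arc is tangent to $\ell'$ at $\hat z_0$ or $\hat z_1$ (which would make the horizontal component equal rather than strictly unequal) and the possibility that an arc touches $\partial D$ in the interior. The former is excluded by the small-$\beta$ no-tangency result established in the Proposition preceding Theorem \ref{thm:class_sol_k}: an arc tangent to $\ell$ at an endpoint would, being a solution of \eqref{eq:diffProb}, have total swept angle $<\pi$ and could not return to $\ell'$, contradicting that it is one of the arcs of the concatenation; the latter is exactly the content of Theorem \ref{thm:class_sol_k}, which places the arcs in $\mathring D$. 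Once these degeneracies are excluded, the strict inequalities \eqref{eq: W_ineq1}–\eqref{eq: W_ineq2} follow, and combined with Proposition \ref{prop: periodic_realization} they show the minimiser $[\tilde z]$ of $W_n^{[s]}$ lies in the interior $(\mathring{\ell'})^n$, hence is a genuine critical point and produces the desired periodic billiard trajectory.
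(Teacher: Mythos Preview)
Your route via the first-variation formula of Lemma~\ref{lem:refl_var} is a legitimate and more direct strategy than the paper's, but the sign analysis in your prose is inverted and, as written, does not yield \eqref{eq: W_ineq1}. At $z_i=\hat z_0$ the domain $D$ forms a wedge opening to the upper right, bounded below by $\ell'$ (direction $e_x$) and above by $\hat{\mathcal Z}$ (direction $\dot{\hat{\mathcal Z}}(t_0)$, at angle $<\pi/2$ from $e_x$). The outgoing arc $\mathcal Z_2$ must leave \emph{into} this wedge, so indeed $\dot{\mathcal Z}_2(0)\cdot e_x>0$. However the incoming arc $\mathcal Z_1$ arrives \emph{from} inside the wedge, so $\dot{\mathcal Z}_1(T_1)$ points \emph{out} of it (down and to the left): hence $\dot{\mathcal Z}_1(T_1)\cdot e_x<0$, not positive as you claim. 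With the correct signs both summands in
\[
\partial_i W_n^{[s]}\big|_{z_i=\hat z_0}
=\sqrt{h-V_\beta(\hat z_0)}\left(\frac{\dot{\mathcal Z}_1(T_1)}{|\dot{\mathcal Z}_1(T_1)|}\cdot e_x
-\frac{\dot{\mathcal Z}_2(0)}{|\dot{\mathcal Z}_2(0)|}\cdot e_x\right)
\]
are strictly negative, which gives \eqref{eq: W_ineq1} at once; your stated inequality ``outgoing horizontal component strictly smaller than incoming'' is backwards and would produce the wrong sign. The exclusion of the borderline directions is as you indicate: tangency to $\ell$ is ruled out by the no-return proposition, and coincidence with $\dot{\hat{\mathcal Z}}(t_0)$ is ruled out by uniqueness for the Cauchy problem combined with the fact that the minimising arcs lie in $\mathring D$.

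By contrast, the paper does not invoke the velocity formula. It argues locally: in a small ball $B$ centred at $\hat z_0$ the Jacobi metric is close to a rescaled Euclidean metric, the minimising arc exits $B$ at a point $\tilde z$ lying in the wedge, and the angle condition on $\dot{\hat{\mathcal Z}}(t_0)$ then gives the elementary Euclidean inequality $\|z-\tilde z\|<\|\hat z_0-\tilde z\|$ for $z\in\ell'\cap B$, so that sliding the endpoint into the interior of $\ell'$ strictly shortens the arc. Your approach is cleaner once Lemma~\ref{lem:refl_var} is in hand; the paper's has the merit of bypassing any discussion of whether the endpoint velocity (and hence the derivative) is well defined when minimisers are non-unique, since it compares Jacobi lengths directly.
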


\begin{proof}
By Maupertuis' principle, trajectories of the system $(\R^2,V_\beta,h)$ are reparametrisations of geodesics for the Jacobi metric $g_{ij}=(h-V_\beta)\delta_{ij}$. Consider an arc $\mathcal Z$ in $D$ with endpoints $z_0=\hat z_0$ and $z_1\in\ell'$, given by a minimiser $u\in H^k_{z_0z_1}$. Take a small ball $B$ centred at $u(0)$. Since $V_\beta$ is nearly constant in $B$, the Jacobi metric is approximated by a rescaled Euclidean metric in $B$.  

Let $\tilde z$ be the intersection of $u$ with $\partial B$ (see Figure \ref{fig:lem_change_sign}). By additivity of Jacobi length, the segment of $u$ between $z_0$ and $\tilde z$ is minimal. For sufficiently small $B$, the Euclidean distance satisfies $\|z_0-\tilde z\|>\|z-\tilde z\|$ for any $z\in \ell'\cap B$. By Lemma \ref{lem: boundary_existence}, the angle between $\dot{\hat{\mathcal Z}}(t_0)$ and $(1,0)$ is $<\pi/2$, ensuring $u$ remains in $D$. Therefore,
\[
\|u(0)-\tilde z\|>\|z-z_0\|.
\]
This comparison with Euclidean distance implies \eqref{eq: W_ineq1}. A symmetric argument yields \eqref{eq: W_ineq2}.
\end{proof}

\begin{figure}
        \centering        
        \begin{overpic}[height=0.2\textheight]{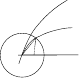}
		\put (89, 98) {\rotatebox{0}{$\hat{\mathcal Z}$}}
        \put (95, 63) {\rotatebox{0}{$u$}}
        \put (5, 20) {\rotatebox{0}{$B$}}
        \put (27, 23) {\rotatebox{0}{$z_0$}}
        \put (44, 54) {\rotatebox{0}{$\tilde z$}}
        \put (41,23) {\rotatebox{0}{$z$}}
        \put (102,25) {\rotatebox{0}{$\ell'$}}
        \end{overpic}
\caption{Local approximation of Jacobi length by Euclidean distance.} 
        \label{fig:lem_change_sign}
\end{figure}

Define $X'= \bigcap_{i= -\infty }^{\infty}F^{i}(X)$, where $\pi$ is well defined. By Proposition \ref{prop: periodic_realization}, $X'$ is non-empty (it contains all periodic billiard trajectories in $D$); moreover, $X'$ is $F$-invariant, and by construction $\pi \circ F = \sigma\circ \pi$.  

To prove that $\pi$ yields a symbolic dynamics between $F_{|_{X'}}$ and $\sigma$, we must show that $\pi$ is surjective and continuous.  

\begin{prop}
The map $\pi: X'\to \Omega$ is surjective. 
\end{prop}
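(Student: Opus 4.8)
The plan is to prove surjectivity of $\pi:X'\to\Omega$ by approximating an arbitrary sequence $s\in\Omega$ with its periodic truncations and extracting a convergent subsequence of the corresponding periodic billiard trajectories. Fix $s=(s_i)_{i\in\Z}\in\Omega$ and, for each $n\in\N$, let $s^{(n)}\in\Omega$ be the periodic sequence with period block $(s_{-n},\dots,s_n)$. By Proposition \ref{prop: periodic_realization}, for $\beta<\bar\beta$ there is a periodic billiard trajectory $\gamma_n$ in $D$ whose arcs have winding numbers prescribed by $[s^{(n)}]$; equivalently a bi-infinite (periodic) sequence of reflection points $\{z^{(n)}_i\}_{i\in\Z}\subset\ell'$ realising $s^{(n)}$, obtained as a minimiser $[\tilde z^{(n)}]$ of the total Jacobi-length functional $W^{[s^{(n)}]}_{2n+1}$. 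The arc of $\gamma_n$ joining $z^{(n)}_i$ to $z^{(n)}_{i+1}$ is the (unique Jacobi length) minimiser in $\mathcal H^{s_i}_{z^{(n)}_i z^{(n)}_{i+1}}$ for $-n\le i\le n$.

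First I would set up the compactness. Since all $z^{(n)}_i$ lie in the compact segment $\ell'$ and all arcs lie in the compact set $D$ with energy fixed at $h$, the reflection points and the velocities at reflection are uniformly bounded (the speed is determined by $h-V_\beta$, which is bounded on $D$ since $D$ is bounded away from $0$: indeed the arcs are non-collisional minimisers, and one can use Lemma \ref{lem: Jacobi_lowerbound} together with a uniform bound on the Jacobi length of each arc — arcs with winding number $s_i$ have Jacobi length bounded in terms of $|s_i|$ — to keep each arc away from the origin). By a diagonal argument over $i\in\Z$, passing to a subsequence I may assume $z^{(n)}_i\to z^{\infty}_i\in\ell'$ for every fixed $i$ as $n\to\infty$.

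Next I would pass to the limit arc by arc. For each fixed $i$, the endpoints $z^{(n)}_i,z^{(n)}_{i+1}$ converge, so by continuous dependence of the variational problem on the endpoints (Theorems \ref{thm: exsistence_minimizer}, \ref{thm:No_Coll}, \ref{thm:class_sol_k}, and the fact that $\bar\beta$ is uniform in the endpoints) the minimising arcs in $\mathcal H^{s_i}_{z^{(n)}_i z^{(n)}_{i+1}}$ converge, up to subsequence, uniformly (hence in the relevant topology) to a collision-free minimising arc in $\mathcal H^{s_i}_{z^\infty_i z^\infty_{i+1}}$, which by Theorem \ref{thm:class_sol_k} is a classical solution of the mechanical system with winding number $s_i$ lying in $\mathring D$. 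The reflection condition \eqref{eq:critPoint} at each interior node $z^{(n)}_i$ is an equation in the first derivatives of Jacobi lengths of adjacent arcs (Lemma \ref{lem:refl_var}); since these derivatives depend continuously on the endpoints (via the formulas in the proof of Lemma \ref{lem:refl_var}) and the node $z^{(n)}_i$ is interior to $\ell'$ for all large $n$ thanks to the sign conditions \eqref{eq: W_ineq1}–\eqref{eq: W_ineq2}, the limiting concatenation satisfies the reflection law at every $z^\infty_i$. Thus the limiting bi-infinite concatenation is a genuine billiard trajectory in $D$, non-collisional, starting from $\ell'$, hence its initial condition $(z^\infty_0,v^\infty_0)$ lies in $X$; since the whole bi-infinite orbit stays in $D$ and is non-collisional, it in fact lies in $X'=\bigcap_i F^i(X)$, and $\pi(z^\infty_0,v^\infty_0)=s$.

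The main obstacle I expect is the diagonal/compactness step together with ensuring the interior-node condition is uniform: one must guarantee that the reflection points $z^{(n)}_i$ of the truncated periodic orbits do not run off to the endpoints $\hat z_0,\hat z_1$ of $\ell'$ as $n\to\infty$, and that the arcs do not degenerate (collapse or approach $\partial D$) in the limit. The sign inequalities \eqref{eq: W_ineq1}–\eqref{eq: W_ineq2} give the interior-node property for each \emph{fixed} $n$, but some care is needed to make it robust under the limit $n\to\infty$ — essentially one wants these boundary derivative estimates, and the non-tangency to $\partial D$ from the Proposition before Theorem \ref{thm:class_sol_k}, to be uniform in $n$, which they are since $\bar\beta$ and all the geometric estimates were shown to be independent of the endpoints and of $k\neq 0$. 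A secondary technical point is that the minimisers are not unique, so the "convergence of arcs" must be phrased as: any sequence of minimisers has a subsequence converging to \emph{a} minimiser — which is exactly what the direct method and uniform convergence give.
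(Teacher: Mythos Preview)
Your proposal is correct and follows essentially the same approach as the paper: periodic truncation of $s$, realisation of each truncation by Proposition~\ref{prop: periodic_realization}, a diagonal extraction on the reflection points in the compact segment $\ell'$, and passage to the limit to obtain a billiard trajectory realising $s$. You are in fact more explicit than the paper about several points (the arc-by-arc limit via the variational characterisation, the role of the sign conditions \eqref{eq: W_ineq1}--\eqref{eq: W_ineq2} in keeping nodes interior, the non-uniqueness of minimisers), whereas the paper compresses the limit step into a single appeal to differentiable dependence of ODE solutions on initial data.
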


\begin{proof}
Let $s\in \Omega$. We need a billiard trajectory realising $s$. Proposition \ref{prop: periodic_realization} covers the periodic case. Assume $s$ is not periodic. For each $n\in \N$, define the periodic sequence $s^{(n)}$ of period
\[
[s^{(n)}]= (s_{-n},\dots,s_0,\dots,s_{n}).
\]
Let $\mathcal Z^{(n)}$ be a billiard trajectory realising $s^{(n)}$. Denote by $x^{(n)}=\{x^{(n)}_j\}_{j\in\Z}$ the $x$-coordinates of the reflection points of $\mathcal Z^{(n)}$ on $\ell'$. For fixed $j$, the sequence $\{x^{(n)}_j\}_n$ is bounded, hence admits a convergent subsequence. By a diagonal extraction (see \cite{barutello2023chaotic}), there exists a subsequence $n_m$ and a sequence $\{\bar x_j\}_{j\in\Z}$ such that $x^{(n_m)}_j\to \bar x_j$ for each $j$. Setting $\bar z_j=(\bar x_j,-L)\in \ell'$, the differentiable dependence of solutions on initial data ensures that the reflection law still holds at every bounce. Thus there exists a billiard trajectory in $D$ consisting of arcs in $H^{s_k}_{\bar z_k\bar z_{k+1}}$, realising $s$.
\end{proof}

\begin{prop}\label{prop: time_uniform_bounded_allK}
Let $\mathcal Z$ be an arc of a billiard trajectory in $D$ with winding number $k\in\Z\setminus\{0\}$ and endpoints $\mathcal Z(t_0)=(x_0,-L)$, $\mathcal Z(t_1)=(x_1,-L)\in\ell'$. Then the travel time $T=|t_1-t_0|$ is uniformly bounded in $x_0,x_1$, and in $k$. 
\end{prop}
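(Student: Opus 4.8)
The plan is to reduce the travel time along $\mathcal Z$ to a one-dimensional (radial) quantity and then bound that quantity uniformly. Between two consecutive reflections $\mathcal Z$ solves the central-force system associated with $V_\beta$, so along it the energy equals $h$ and the angular momentum $C=\mathcal Z\wedge\dot{\mathcal Z}$ is conserved; since the speed is fixed by $|\dot{\mathcal Z}|^2=2(h-V_\beta(\mathcal Z))$ and $\mathcal Z\subset D$ with $D$ compact, we have $|\mathcal Z|\le R:=\max_{z\in D}|z|$ and an a priori bound $|C|\le C_{\max}$, with $C_{\max}$ independent of $x_0,x_1$ and $k$. Passing to polar coordinates as in \eqref{eq:angleBeta}, the radial equation reads $\tfrac12\dot r^2=h+\tfrac1r-\tfrac{C'^2}{2r^2}$ with $C'^2=C^2-2\beta$, which is exactly the radial equation of a Keplerian motion of energy $h$, unit gravitational parameter and angular momentum $C'$. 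Consequently $t\mapsto|\mathcal Z(t)|$ coincides with the radial part of such a Keplerian orbit, and the time spent by $\mathcal Z$ between two radii equals the corresponding Keplerian time.

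This central orbit has a single pericentre $r_{\min}=r_{\min}(C')$, the unique positive root of $h+\tfrac1r-\tfrac{C'^2}{2r^2}=0$, and $r$ is monotone on either side of it; since $|\mathcal Z|\le R$, the travel time $T$ of the arc is at most twice the Keplerian pericentre-to-$R$ time
\[
\tau(C'):=\int_{r_{\min}(C')}^{R}\frac{dr}{\sqrt{2\bigl(h+\tfrac1r-\tfrac{C'^2}{2r^2}\bigr)}}.
\]
Moreover $\tfrac12\dot r(t_0)^2=h+\tfrac1{|z_0|}-\tfrac{C'^2}{2|z_0|^2}\ge0$ forces $r_{\min}(C')\le|z_0|\le R$, hence $C'$ ranges in a bounded interval $[0,C'_{\max}]$ with $C'^2_{\max}\le 2hR^2+2R$. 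It therefore suffices to prove $\sup_{C'\in[0,C'_{\max}]}\tau(C')<\infty$.

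For this I would use the hyperbolic eccentric anomaly $F$: with $a=1/(2h)$ and $e=\sqrt{1+2hC'^2}$ one has $r=a(e\cosh F-1)$ and $t=a^{3/2}(e\sinh F-F)$, so that $\tau(C')=a^{3/2}\bigl(e\sinh F_R-F_R\bigr)$ where $\cosh F_R=(R/a+1)/e$ --- well defined, since $C'^2_{\max}\le 2hR^2+2R$ gives $e\le R/a+1$. As $C'\to0^+$ we get $e\to1$, so $F_R\to\operatorname{arccosh}(2hR+1)$ and $\tau(C')$ converges to the finite time of the degenerate radial ejection--collision orbit; combined with continuity of $\tau$ for $C'>0$, this shows $\tau$ extends continuously to the compact interval $[0,C'_{\max}]$ and is thus bounded. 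Hence $T\le 2\sup_{[0,C'_{\max}]}\tau=:T^\ast$, a bound depending only on $h,L,\beta$ and not on $x_0,x_1$ or $k$.

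The main obstacle is exactly this last boundedness statement: as $|k|$ increases the pericentre $r_{\min}$ tends to $0$ and the arc winds around the origin more and more times, so one must verify that the time accumulated near the centre does not blow up --- equivalently, that the pericentre-passage time of a near-radial Keplerian hyperbola of fixed energy tends to $0$. The anomaly computation above makes this transparent; alternatively, substituting $r=r_{\min}(C')+(R-r_{\min}(C'))\,\xi$ moves the square-root singularity to the fixed endpoint $\xi=0$ and lets one conclude by dominated convergence as $C'\to0$.
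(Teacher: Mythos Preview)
Your approach is essentially the paper's: both reduce the travel time to the one–dimensional radial integral, observe that it depends only on the angular momentum (equivalently $C'=\sqrt{C^2-2\beta}$) and on an outer radius, both ranging over compact sets, and then produce a closed form that is manifestly bounded. The paper integrates directly—completing the square to land on an $\operatorname{arccosh}$—while you invoke the hyperbolic Kepler equation $t=a^{3/2}(e\sinh F-F)$; these are two parametrisations of the same antiderivative. Your write-up has the virtue of singling out the delicate point, the limit $C'\to 0$ (i.e.\ $|k|\to\infty$, $r_{\min}\to 0$), and checking explicitly that $\tau(C')$ extends continuously there; the paper simply displays the formula and appeals to compactness of $(C,r_1)$.

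One minor caveat, which the paper shares: writing $C'^2=C^2-2\beta\ge 0$ (in the paper, $A=(C^2-2\beta)/2>0$) tacitly excludes $C^2\le 2\beta$. In that regime the effective potential has no centrifugal barrier, so $\dot r^2\ge 2h$ and $r$ is monotone along any collision-free arc with endpoints on $\ell'$; the time is then at most $|r_1-r_0|/\sqrt{2h}$, trivially bounded. Adding this one line closes the case.
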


\begin{proof}
Let $r_{min}=\min_{t\in[t_0,t_1]}|\mathcal Z(t)|$ and $r_1=|\mathcal Z(t_1)|$. From \eqref{eq:PolEq}, the travel time from $r_{min}$ to $r_1$ is
\[
T=\frac{1}{\sqrt{2}}\int_{r_{min}}^{r_1} \frac{dr}{\sqrt{h-V_{eff}(r)}}=\frac{1}{\sqrt{2}}\int_{r_{min}}^{r_1}\frac{r\,dr}{\sqrt{hr^2+ r-A}},
\]
with $A=(C^2-2\beta)/2>0$ and $C$ the angular momentum.   The integral can be computed explicitly, recalling that $r_{min}$ is a root of the denominator: 
    \begin{equation*}
        \begin{aligned}
            T&=\frac{1}{2\sqrt{2}h}\left[\int_{r_{min}}^{r_1}\frac{2hr+1}{\sqrt{hr^2+1-A}}dr-\int_{r_{min}}^{r_1}\frac{dr}{\sqrt{hr^2+ r -A}}\right]\\
            &=\frac{\sqrt{h r_1^2+ r_1 -A}}{\sqrt{2}h}-\frac{1}{2\sqrt{2}h}\int_{r_{min}}^{r_1}\frac{dr}{\sqrt{\left(\sqrt{h}r+\dfrac{1}{2\sqrt{h}}\right)^2-\left(A+\dfrac{1}{4h}\right)}}. 
        \end{aligned}
    \end{equation*}
With the change of variables $x=\sqrt{h}r+1/(2\sqrt{h})$, and setting $B=A+1/(4h)$, $x_{min}=\sqrt{B}$ and $x_1=\sqrt{h}r_1+1/(2\sqrt{h})$, we find

    \begin{equation*}
        \begin{aligned}
        &\int_{r_{min}}^{r_1}\frac{dr}{\sqrt{\left(\sqrt{h}r+\dfrac{\mu}{2\sqrt{h}}\right)^2-\left(A+\dfrac{\mu^2}{4h}\right)}}= \dfrac{1}{\sqrt{h}}\int_{x_{min}}^{x_1}\frac{dx}{\sqrt{x^2-B}}\\
        &= \dfrac{1}{\sqrt{h}}\left[arccosh\left(\dfrac{x}{\sqrt{B}}\right)\right]_{x_{min}}^{x_1}=\dfrac{1}{\sqrt{h}}arccosh\left(\dfrac{2 h r_1 + \mu}{2 h (C^2-2\beta)+ \mu^2}\right). 
        \end{aligned}
    \end{equation*}

Since $C$ and $r_1$ range over compact sets, the bound on $T$ follows.
\end{proof}

\begin{prop}\label{prop: pi_continuity}
The map $\pi$ is continuous on $X'$.
\end{prop}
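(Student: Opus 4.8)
The plan is to show that $\pi:X'\to\Omega$ is continuous by exploiting the definition of the metric on $\Omega$: two sequences are close precisely when they agree on a long central block. So it suffices to prove that if $(z_0,v_0)\in X'$ and $(z_0^{(m)},v_0^{(m)})\in X'$ with $(z_0^{(m)},v_0^{(m)})\to (z_0,v_0)$, then for every fixed $N$ the first $2N+1$ symbols $s_{-N},\dots,s_N$ of $\pi(z_0^{(m)},v_0^{(m)})$ eventually coincide with those of $\pi(z_0,v_0)$. Equivalently, the winding numbers of the first $N$ forward arcs and $N$ backward arcs depend continuously (hence locally constantly, being integer-valued) on the initial condition.

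First I would establish that the first-return map $F$ is continuous on $X'$. This is where Proposition \ref{prop: time_uniform_bounded_allK} enters: along an arc of winding number $k$, the travel time $T$ is uniformly bounded (independently of the endpoints and of $k$), and the arc stays in the compact set $D$ away from the origin (collision-free, by Theorem \ref{thm:No_Coll} and the pericentre bound of Lemma \ref{lem: Jacobi_lowerbound}), so the flow of $\ddot z=-\nabla V_\beta(z)$ is defined and smooth on a neighbourhood of the relevant piece of trajectory. Differentiable dependence of ODE solutions on initial data then gives that the next reflection point $z_1$, the incoming velocity, and hence (after applying the reflection law, which is a smooth operation away from grazing) the outgoing datum $(z_1,v_1)=F(z_0,v_0)$ vary continuously; iterating, $F^j$ is continuous on $X'$ for each $j\in\Z$ (for $j<0$ one runs the flow backwards, using reversibility of the mechanical system). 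Here one must also note that the arcs emanating from points of $X'$ meet $\ell$ transversally and not at the endpoints of $\ell'$ — this is exactly what the boundary estimates \eqref{eq: W_ineq1}--\eqref{eq: W_ineq2} and the construction of $X'=\bigcap_i F^i(X)$ guarantee, so no grazing/degenerate bounce occurs and $F$ is a genuine local homeomorphism near orbits in $X'$.

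Next, for a fixed arc, the winding number $\operatorname{Ind}(\cdot)$ is a homotopy invariant of the arc relative to its endpoints on $\ell'$, within the class of collision-free curves in $D$; since a small perturbation of the initial datum produces, by the continuity just established, a uniformly small $C^0$-perturbation of the whole arc (it stays in $D$, collision-free), the winding number is locally constant along $X'$. Combining: given $N$, continuity of $F^{-N},\dots,F^{N}$ at $(z_0,v_0)$ provides a neighbourhood on which each of the $2N+1$ relevant arcs is $C^0$-close to the corresponding arc of the reference orbit, hence has the same winding number; so $\pi$ maps this neighbourhood into the ball $\{s:s_i=\bar s_i\text{ for }|i|\le N\}$, which has $d(\cdot,\pi(z_0,v_0))$-radius $O(\lambda^{-N})$. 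Letting $N\to\infty$ gives continuity.

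The main obstacle is the uniformity in $k$: a priori, as the perturbation parameter $m$ varies, the arc with a large winding number $k$ wraps many times near the origin, and one must be sure this does not cause blow-up of travel time or loss of compactness that would break the continuous dependence on initial data over a full first-return step. This is precisely resolved by Proposition \ref{prop: time_uniform_bounded_allK} (uniform bound on $T$ in $x_0,x_1,k$) together with the uniform pericentre lower bound, which confines each arc to a fixed compact annulus in $D\setminus\{0\}$; on that annulus $\nabla V_\beta$ is Lipschitz, so Gronwall gives the needed modulus of continuity uniformly across winding numbers. The only remaining delicate point is continuity at orbits that pass through an endpoint of $\ell'$ in the \emph{limit}, but such orbits are excluded from $X'$ by construction (the strict inequalities \eqref{eq: W_ineq1}--\eqref{eq: W_ineq2} keep the reflection points in the interior of $\ell'$), so $X'$ is exactly the set on which all the above continuity arguments apply.
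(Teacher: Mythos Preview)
Your proposal is correct and follows essentially the same approach as the paper: reduce continuity of $\pi$ to matching finitely many symbols via the metric on $\Omega$, then use continuous dependence of ODE solutions on initial data together with Proposition~\ref{prop: time_uniform_bounded_allK} (the uniform travel--time bound) to ensure the relevant arcs, and hence their winding numbers, are stable under small perturbations of the initial condition. You flesh out more details than the paper's rather terse proof does (continuity of $F$ and $F^{-1}$, local constancy of $\operatorname{Ind}$, transversality at bounces), but the skeleton is identical.
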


\begin{proof}
Fix $\varepsilon>0$. By the definition of the metric on $\Omega$,  for all $s,\bar s\in \Omega$, there exists $I\in \N$ such that,
\[
\sum_{|i|>I}\frac{\delta(s_i,\bar s_i)}{\lambda^{|i|}}<\varepsilon.
\]
Hence it suffices to ensure that, for any two billiard trajectories $\mathcal Z,\bar{\mathcal Z}$ starting from $\ell'$ with sufficiently close initial conditions, the winding numbers of the $i$-th arcs coincide for $|i|\leq I$. This follows from continuous dependence of solutions on initial conditions. Proposition \ref{prop: time_uniform_bounded_allK} guarantees that reflection times remain uniformly bounded, so the claim follows. Thus, $\pi$ is a continuous surjective map from $X'$ onto $\Omega$, intertwining the billiard map $F_{|_{X'}}$ with the Bernoulli shift $\sigma$. 
\end{proof}

The good definition of $\pi$ on a suitable subset of the initial conditions, along with its properties, allow us to state the final result our work, resumed in Theorem \ref{thm:final}, as well as Corollary \ref{cor:final}. 

\section{Conclusions and final remarks}\label{sec:conclusions}

The results presented in Theorem \ref{thm:final} and Corollary \ref{cor:final} demonstrate that introducing a strong-force correction to the Boltzmann billiard in the purely Keplerian case breaks its integrability, giving rise to a chaotic invariant set in the phase space of the billiard map.

In our framework, the possibility that the particle may, in principle, approach arbitrarily close to the centre of attraction is not a drawback. On the contrary, such close encounters generate winding trajectories, which serve as the foundation for our symbolic dynamics. Naturally, collisions remain a delicate technical issue, requiring precise estimates to connect the Jacobi length of minimising arcs with their pericentres.

It is worth noting that, although Boltzmann's original formulation does not specify the particle's position relative to the centre, most subsequent authors have preferred to place the particle in the opposite half-plane to avoid singularities. Nevertheless, as observed by Zhao \cite{Zhao2021}, the integrability of the Keplerian model is preserved on both sides.

A different perspective arises when considering the possible application of KAM theory, as discussed by Felder \cite{Felder}. There, the integrable Boltzmann billiard is shown to satisfy the Poncelet property, with level sets of the phase space characterised by families of periodic trajectories. Moreover, Felder constructs action-angle coordinates, enabling the application of perturbative KAM arguments, and thereby rules out ergodicity for sufficiently small values of $|\beta|$. In the geometry he considers, where the particle is uniformly bounded away from the centre by the presence of the wall, this reasoning is valid, since for small $\beta$ the potential $V_\beta$ indeed represents a perturbation of the integrable case. However, as already emphasised in the Introduction, such argument does not apply in our setting, where the particle may approach the centre arbitrarily closely.
Nevertheless, alternative considerations allow us to make progress regarding the system's ergodic properties. In particular, adopting the notion of quasi-ergodicity from \cite{Ehrenfest1911}, we observe that the invariant set $X'$ identified by our symbolic dynamics is strictly contained within the domain $X$ of the billiard map, since the trajectories under consideration lie within the bounded region $D$. Because of the arbitrariness in choosing the upper barrier $\hat{\mathcal Z}$, it is always possible to identify an open set in $X$ that is not contained in $X'$, and thus remains inaccessible from it. The situation is more subtle when adopting Birkhoff's definition of ergodicity \cite{Birkhoff1931}, which requires every invariant set to have either zero or full measure. At present, we cannot exclude the possibility that the invariant set we have constructed possesses zero measure; a deeper analysis of its structure will be required to resolve this question.

In this work, we have restricted attention to the case $\alpha, \beta, h > 0$, though the Boltzmann model can be studied well beyond these assumptions. In particular, our choice of positive energy implies that the dynamics induced by $V$ may be unbounded, leading to an ill-defined map for initial conditions corresponding to arcs that escape to infinity without returning to the reflecting table. This difficulty does not arise when $h<0$, since the Hill's region confines the particle's motion to a bounded subset of the plane. By contrast, for $h=0$, unbounded motions do occur, a case we plan to investigate in future work. Finally, we remark that our techniques do not extend to the case $\beta < 0$, where the presence of a centrifugal force requires a different approach, potentially based on the search for min-max rather than minimising solutions. \\
While, in principle, all the parameters of our problem, namely, $\alpha, \beta, L, h$ can change independently, the normalization leading to $\tilde V$ underlines how the real parameters of the problem are actually 
\[
\tilde \alpha= \frac{\alpha}{Lh}, \quad \tilde \beta=\tilde\alpha\frac{\beta}{L\alpha}. 
\]
This means that, keeping $\tilde \alpha$ constant (with obvious consequences on the relations between the original $\alpha$, $L$ and $h$), the small parameter of the problem is represented by $\beta(L\alpha)^{-1}$.  

\appendix

\section{Relationship between minimisers of $\mathcal M$ and $\mathcal L$}\label{sec:appA}

This Appendix is intended as a complement to prove that, given suitable hypotheses, a minimiser of the Maupertuis functional $\mathcal M$ minimises also the Jacobi length $\mathcal L$ in a general setting. Although more general in its treatise, such result is crucial in Section \ref{sec:fix_ends}, Corollary \ref{cor:minMminL}, to prove that minimisers of $\mathcal M$ in $\mathcal H_{z_0z_1}^k$ do not touch $\partial D$ except for their endpoints. The basic ideas resumed here come from the general theory of calculus of variations, and in particular from \cite{Cellina}. \\
Let us then start by taking $\Omega\subset\R^n$,   $p_0, p_1\in \Omega$, and 
\[
A= \left\{u\in H^{0,1}([0,1], \Omega) \ |\ u(0)=p_0 \text{ and }u(1)=p_1\right\}. 
\]
Then consider a potential $V:\Omega\to \mathbb R$ which is continuous and strictly positive.

\begin{lemma} \label{lem:NotZeroAE}
    Let $u\in A$; there exists a reparametrisation $\tilde u\in A$ such that $|\dot{\tilde u}(t)|\neq 0$ almost everywhere in $[0,1]$. In such case, $\mathcal M(\tilde u)\leq \mathcal M(u)$. As a consequence,  if $u$ is a minimiser of $\mathcal M$ in $A$, the set $\{t\in[0,1]\ |\ |\dot{u}(t)|=0\}$ has null measure in $[0,1]$. 
\end{lemma}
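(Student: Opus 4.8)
The plan is to build the reparametrisation explicitly by "removing" the parts of the interval where the speed vanishes, collapsing them to measure zero, and then rescaling to put everything back on $[0,1]$. First I would work with the arc-length-type function. For $u \in A$, set $L(u) = \int_0^1 |\dot u(s)|\,ds$ and define the continuous nondecreasing function $\varphi(t) = \frac{1}{L(u)}\int_0^t |\dot u(s)|\,ds$ if $L(u) > 0$; the degenerate case $L(u) = 0$ forces $u$ to be constant, which is impossible since $p_0 \neq p_1$ (and if $p_0 = p_1$ the statement is trivial, as the constant path already has $\mathcal M = 0$, though here we may simply assume $p_0\neq p_1$ or note any path is admissible). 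The function $\varphi : [0,1] \to [0,1]$ is surjective but need not be injective: it is constant exactly on the maximal intervals where $|\dot u| = 0$ a.e. I would then define $\tilde u$ by requiring $\tilde u(\varphi(t)) = u(t)$; this is well-defined because $u$ is itself constant on any interval where $\varphi$ is constant (there $\dot u = 0$ a.e., so $u$ does not move). One checks $\tilde u \in H^{1}$, $\tilde u(0) = p_0$, $\tilde u(1) = p_1$, and $\tilde u$ traverses the same geometric curve as $u$.

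The key point is that $\tilde u$ now has $|\dot{\tilde u}(\tau)| = $ const $= L(u)$ for a.e. $\tau$: indeed $\tilde u$ is parametrised proportionally to arc length, so $\int_0^1 |\dot{\tilde u}|^2\,d\tau = L(u)^2 = \left(\int_0^1 |\dot u|\,ds\right)^2 \leq \int_0^1 |\dot u|^2\,ds$ by Cauchy--Schwarz. For the potential factor, since $\tilde u$ and $u$ trace the same curve, a change of variables gives $\int_0^1 V(\tilde u(\tau))\,|\dot{\tilde u}(\tau)|\,d\tau = \int_0^1 V(u(s))\,|\dot u(s)|\,ds$, i.e. the Jacobi lengths agree; but to compare the $\int V$ factors (not weighted by speed) appearing in $\mathcal M$ I would instead argue directly: writing $\mathcal M(w) = \frac12 \|\dot w\|_2^2 \int_0^1(h - V(w))$ — wait, here $V$ itself plays the role of $h - V_\beta$, so $\mathcal M(w) = \frac12\int_0^1|\dot w|^2 \int_0^1 V(w)$. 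Using $\mathcal L(w)^2 = \left(\int_0^1 |\dot w|\sqrt{V(w)}\right)^2 \leq \mathcal M(w) \cdot 2$ with equality iff $|\dot w|^2 = mV(w)$ a.e., and the fact that $\tilde u$ satisfies exactly this proportionality relation ($|\dot{\tilde u}|$ constant and, after a further reparametrisation absorbing $\sqrt{V}$, one can even arrange $|\dot{\tilde u}|^2 = mV(\tilde u)$), I get $2\mathcal M(\tilde u) = \mathcal L(\tilde u)^2 = \mathcal L(u)^2 \leq 2\mathcal M(u)$, using that Jacobi length is reparametrisation-invariant. Hence $\mathcal M(\tilde u) \leq \mathcal M(u)$.

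Finally, for the consequence: if $u$ minimises $\mathcal M$ in $A$, then $\mathcal M(\tilde u) \leq \mathcal M(u) = \min \mathcal M$, so equality holds throughout, forcing equality in the Cauchy--Schwarz step $\left(\int|\dot u|\sqrt{V(u)}\right)^2 = \int|\dot u|^2 \int V(u)$; combined with strict positivity and continuity of $V$ this gives $|\dot u(t)|^2 = m\,V(u(t))$ for a.e. $t$, with $m \geq 0$. If $m = 0$ then $u$ is constant, contradicting $p_0 \neq p_1$; hence $m > 0$ and $|\dot u(t)|^2 = m V(u(t)) > 0$ a.e., so $\{t : |\dot u(t)| = 0\}$ is null.

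The main obstacle I expect is the careful construction and regularity of $\tilde u$ when $\varphi$ fails to be injective — one must verify that $\tilde u$ genuinely lies in $H^{1}$ and that the change-of-variables / chain-rule manipulations are legitimate for merely Sobolev (not $C^1$) paths, where $|\dot u|$ can vanish on a fat set. This is a standard but slightly delicate point in the "reparametrisation by arc length" lemma; it is handled by approximating $\varphi$ by strictly increasing functions (e.g. $\varphi_\epsilon(t) = (1-\epsilon)\varphi(t) + \epsilon t$), performing the construction for each $\epsilon$, and passing to the limit using weak lower semicontinuity of the relevant integrals and the reparametrisation-invariance of $\mathcal L$.
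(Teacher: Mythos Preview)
Your argument is correct, but it follows a genuinely different route from the paper's. The paper proceeds by direct \emph{excision}: it considers the set $\mathcal I=\{t:|\dot u(t)|=0\}$, removes it from the parameter interval, glues the remaining pieces into a map $u_*:[0,b]\to\Omega$, and observes that $\int_0^b|\dot u_*|^2=\int_0^1|\dot u|^2$ while $\int_0^b V(u_*)\leq\int_0^1 V(u)$ (strictly when $|\mathcal I|>0$, since $V>0$); an affine rescaling back to $[0,1]$ leaves $\mathcal M$ unchanged and yields the conclusion. No reference to the Jacobi length is made at this stage. Your approach instead reparametrises by arc length and then by the Jacobi time, so as to force $|\dot{\tilde u}|^2=mV(\tilde u)$, and then exploits reparametrisation-invariance of $\mathcal L$ together with $\mathcal L^2\leq 2\mathcal M$ to get $2\mathcal M(\tilde u)=\mathcal L(\tilde u)^2=\mathcal L(u)^2\leq 2\mathcal M(u)$. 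This is sound, and your deduction of the ``consequence'' via equality in Cauchy--Schwarz (forcing $|\dot u|^2=mV(u)>0$ a.e.) is cleaner than what one extracts from the excision argument. The trade-off is structural: you are effectively invoking the content of Lemma~\ref{lem:L2equal2M} inside the proof of Lemma~\ref{lem:NotZeroAE}, so the two lemmas become entangled rather than layered (not circular, since the arc-length step already guarantees the hypothesis $|\dot u|\neq 0$ a.e.\ needed for the Jacobi reparametrisation). The paper's excision argument is more elementary and keeps the two lemmas independent; your route is shorter overall if one is willing to merge them. The $H^1$-regularity concern you flag is real for both approaches and is handled in the literature exactly as you suggest.
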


\begin{proof}
    Let us take $u\in A$, and consider the set $\mathcal I=\{t\in[0,1]\ |\ |\dot{u}(t)|=0\}$: let us observe that, whenever $\mathcal I$ contains an interval, it holds that $u(t)=const$ in such interval. The complementary $[0,1]\setminus\mathcal I$ is a countable union of open intervals, outside of which $u(t)$ is constant: let us then consider $u_{|_{[0,1]\setminus \mathcal I}}$, and reparametrise it into a new function $u_*: [0, b]\to \Omega$, $[0,b]\subseteq[0,1]$, by gluing together the pieces corresponding to each connected component. Since $u$ is constant in every connected component of $\mathcal I$, $u_*\in H^{0, 1}([0,b], \Omega)$, $u_*([0,b])=u([0,1])$ and it holds 
    \[
    \int_0^b |\dot u_*(t)|^2dt=\int_0^1|\dot u(t)|^2dt , \, \int_0^b V(u_*(t))dt\leq\int_0^1V(u(t))dt, 
    \]
    so that 
    \[
    \dfrac12   \int_0^b |\dot u_*(t)|^2dt \int_0^b V(u_*(t))dt\leq \mathcal M(u). 
    \]
    Let us now take a rescaling $\tilde u(t)=u_*(bt)$: it holds that $\tilde u\in A$, $\tilde u([0,1])=u([0,1])$ and 
    \[
    \mathcal M(\tilde u)=\dfrac12   \int_0^b |\dot u_*(t)|^2dt \int_0^b V(u_*(t))dt \leq \mathcal M(u). 
    \]
\end{proof}

\begin{lemma}\label{lem:L2equal2M}
    Let $u\in A$ such that $|\dot u(t)|\neq 0$ almost everywhere in $[0,1]$. Then there exists a reparametrisation $v$ of $u$ such that $\mathcal L(v)^2=2\mathcal M(v)$. 
\end{lemma}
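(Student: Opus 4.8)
The plan is to obtain $v$ by a two-stage reparametrisation of $u$. Observe first that the target identity $\mathcal L(v)^2 = 2\mathcal M(v)$ is exactly the equality case of the Cauchy--Schwarz inequality
\[
\mathcal L(v) = \int_0^1 |\dot v(s)|\sqrt{V(v(s))}\,ds \le \Big(\int_0^1 |\dot v|^2\Big)^{1/2}\Big(\int_0^1 V(v)\Big)^{1/2} = \sqrt{2\mathcal M(v)},
\]
which holds if and only if $|\dot v(s)| = c\,\sqrt{V(v(s))}$ for a.e.\ $s$ and some constant $c\ge 0$. So it is enough to reparametrise $u$ so that its speed becomes proportional to $\sqrt{V(\,\cdot\,)}$ along the curve.

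First I would pass to the Euclidean arc-length parametrisation, in order to replace $u$ by a Lipschitz representative of the same curve. Since $u$ is continuous, $\Gamma := u([0,1])$ is a compact subset of $\Omega$, so $m_0 := \min_\Gamma V > 0$ and $M_0 := \max_\Gamma V < \infty$. Set $L_E := \int_0^1 |\dot u|\,dt$, which is strictly positive because $|\dot u|\neq 0$ a.e., and $\ell(t) := L_E^{-1}\int_0^t |\dot u(\tau)|\,d\tau$. The hypothesis $|\dot u|\neq 0$ a.e.\ makes $\ell:[0,1]\to[0,1]$ a strictly increasing, absolutely continuous homeomorphism; a standard argument — combining the chain rule for the composition $u = w\circ\ell$ of the Lipschitz map $w := u\circ\ell^{-1}$ with the absolutely continuous map $\ell$, together with the Luzin $N$ property — shows that $w$ is Lipschitz with $|\dot w| = L_E$ a.e., and that $w$ has the same endpoints and trace as $u$.

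Second I would reparametrise $w$. Since $V\circ w$ is continuous with $m_0 \le V\circ w \le M_0$, the map $\Phi(s) := \int_0^s (V(w(\tau)))^{-1/2}\,d\tau$ is a bi-Lipschitz increasing homeomorphism of $[0,1]$ onto $[0,\Phi(1)]$, and so is $\Phi^{-1}$. Put $\psi(s) := \Phi^{-1}(\Phi(1)\,s)$, a bi-Lipschitz increasing self-homeomorphism of $[0,1]$ with $\psi'(s) = \Phi(1)\sqrt{V(w(\psi(s)))}$ a.e., and $v := w\circ\psi$. Being a composition of Lipschitz maps, $v$ is Lipschitz, hence $v\in H^{0,1}([0,1],\Omega)$; moreover $v(0)=p_0$, $v(1)=p_1$, so $v\in A$, it is a reparametrisation of $u$, and the chain rule gives $|\dot v(s)| = |\dot w(\psi(s))|\,\psi'(s) = L_E\Phi(1)\sqrt{V(v(s))}$ a.e. Thus $|\dot v| = c\sqrt{V(v)}$ a.e.\ with $c := L_E\Phi(1)$, and substituting into the two functionals yields $\mathcal L(v) = c\int_0^1 V(v)$ and $2\mathcal M(v) = c^2\big(\int_0^1 V(v)\big)^2$, hence $\mathcal L(v)^2 = 2\mathcal M(v)$.

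The only delicate point is the passage through the Lipschitz parametrisation $w$: one cannot simply compose $u$ directly with the inverse of the ``Jacobi arc-length'' change of variables, because the inverse of a merely absolutely continuous strictly increasing function need not be absolutely continuous, so the resulting curve might fail to belong to $H^{0,1}$. Once $w$ is Lipschitz, every reparametrisation and chain-rule step above is routine; this is where the hypothesis $|\dot u|\neq 0$ a.e.\ is genuinely used, namely to guarantee that $\ell$ is invertible.
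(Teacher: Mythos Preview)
Your proof is correct and rests on the same idea as the paper's: reparametrise so that $|\dot v|$ is a constant multiple of $\sqrt{V(v)}$, which is exactly the equality case in Cauchy--Schwarz. The paper does this in a single step, setting $s(t)=\int_0^t |\dot u|/\sqrt{2V(u)}$ and $v=u\circ s^{-1}$ (followed by a rescaling to $[0,1]$), and then checks $|v'|^2=2V(v)$ a.e. Your two-stage route --- first pass to the Euclidean arc-length parametrisation to obtain a Lipschitz representative $w$, then apply a bi-Lipschitz change of variable built from $\int (V\circ w)^{-1/2}$ --- is more careful on precisely the point you flag: the inverse of a strictly increasing absolutely continuous map need not be absolutely continuous, so the paper's direct composition $u\circ s^{-1}$ is not immediately seen to lie in $H^{0,1}$. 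By keeping every intermediate object Lipschitz you make each chain-rule step routine. The paper's argument is shorter but glosses over this regularity issue; yours closes it.
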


\begin{proof}
    By Cauchy-Schwartz inequality, we know that for every $u\in A$ one has $\mathcal L^2(u)\leq 2\mathcal M(u)$, with equality if and only if there exists $k\in \mathbb R$ such that 
    \begin{equation}\label{eq:EqCond}
    |\dot u(t)|^2=k V(u(t)) \text{ for almost every } t\in[0,1]. 
    \end{equation}
    Let us then take $u\in A$ such that $|\dot u(t)|\neq 0$ almost everywhere in $[0,1]$, and take the new time parameter 
    \[
    s(t)=\int_0^t\frac{|\dot u(\xi)|}{\sqrt{2V(u(\xi))}}d\xi: 
    \]
    the parameter $s(t)$ is strictly increasing in $t$, hence its inverse $t(s)$ is well defined and $t'(s)=\sqrt{2V(u(t(s)))}/|\dot u(t(s))|$ almost everywhere in $[0, S]$, $S>0$ being a suitable constant.
    Let us consider the new reparametrisation $v(s)=u(t(s)): $ it holds 
    \[
    \dfrac12\Bigg|\dfrac{d}{ds}v(s)\Bigg|^2-V(v(s))=\dfrac12|\dot u(t)|^2 2 \frac{V(u(t))}{|\dot u(t)|^2}-V(v(s)) =0
    \]
    almost everywhere in $[0,S]$. Then condition \eqref{eq:EqCond} holds and $\mathcal L^2(v)=2\mathcal M(v)$. It is now sufficient to rescale $v$ in $[0,1]$ and the same equality holds.  
\end{proof}

\begin{prop}\label{prop:minML}
    Let $u\in A$ a minimiser of $\mathcal M$. Then $u$ minimises $\mathcal L$ as well. 
\end{prop}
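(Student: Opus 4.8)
The plan is to deduce the statement from the two preceding lemmas together with the Cauchy--Schwarz inequality $\mathcal L^2(w)\le 2\mathcal M(w)$, using crucially that the Jacobi length $\mathcal L$ is invariant under monotone reparametrisations whereas the Maupertuis functional $\mathcal M$ is not. Fix a minimiser $u$ of $\mathcal M$ in $A$; it suffices to show $\mathcal L(u)\le \mathcal L(w)$ for an arbitrary competitor $w\in A$.

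First I would replace $w$ by a curve realising equality in Cauchy--Schwarz without changing its Jacobi length. By Lemma~\ref{lem:NotZeroAE} there is a reparametrisation $\tilde w\in A$ of $w$ with $|\dot{\tilde w}|\neq 0$ almost everywhere; since the pieces discarded in that construction are precisely the ones on which $w$ is constant, they carry no Jacobi length, and hence $\mathcal L(\tilde w)=\mathcal L(w)$. Applying Lemma~\ref{lem:L2equal2M} to $\tilde w$ then yields a further reparametrisation $v\in A$ of $\tilde w$ with $\mathcal L(v)^2=2\mathcal M(v)$, and once more $\mathcal L(v)=\mathcal L(\tilde w)=\mathcal L(w)$, because the time substitution and the final affine rescaling to $[0,1]$ are monotone and leave the (parametrisation-free) integral $\int|\dot u|\sqrt{V(u)}$ untouched.

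The conclusion is then immediate from the chain
\[
\mathcal L(u)^2 \;\le\; 2\mathcal M(u) \;\le\; 2\mathcal M(v) \;=\; \mathcal L(v)^2 \;=\; \mathcal L(w)^2 ,
\]
where the first inequality is Cauchy--Schwarz applied to $u$, the second uses that $u$ minimises $\mathcal M$ over all of $A$ while $v\in A$, the equality is Lemma~\ref{lem:L2equal2M}, and the last step is reparametrisation invariance of $\mathcal L$. Taking square roots and letting $w$ vary over $A$ shows that $u$ minimises $\mathcal L$.

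The delicate point --- more a matter of careful bookkeeping than a genuine difficulty --- is verifying that none of the reparametrisations invoked above alters $\mathcal L$: the gluing in Lemma~\ref{lem:NotZeroAE} changes the domain and deletes intervals on which the curve is stationary, while Lemma~\ref{lem:L2equal2M} uses an absolutely continuous strictly increasing time change followed by a rescaling. For each of these one checks, via the change-of-variables formula and the fact that $\dot w=0$ a.e.\ on the deleted set, that $\int|\dot{\cdot}|\sqrt{V(\cdot)}$ is preserved and that the endpoints $p_0,p_1$ (hence membership in $A$) are kept; with that in hand the argument above is complete.
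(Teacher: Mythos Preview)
Your proof is correct and follows essentially the same route as the paper's: both apply Lemma~\ref{lem:NotZeroAE} and then Lemma~\ref{lem:L2equal2M} to an arbitrary competitor so as to realise equality in Cauchy--Schwarz, and then compare with the minimiser. The only cosmetic differences are that the paper argues by contradiction and invokes the equality $\mathcal L^2(u)=2\mathcal M(u)$ for the minimiser (as a critical point, via \cite{ambrosetti2012periodic}), whereas you proceed directly and need only the inequality $\mathcal L^2(u)\le 2\mathcal M(u)$; your explicit check that the reparametrisations preserve $\mathcal L$ is also a point the paper leaves implicit.
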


\begin{proof}
    First of all, let us observe that, being $u$ a critical point for $\mathcal M$, it holds $\mathcal L^2(u)=2\mathcal M(u)$ (cfr. \cite{ambrosetti2012periodic}). \\
    Let us assume by contradiction that there exists $z\in A$ such that $\mathcal L(z) <\mathcal L(u)$. From Lemma \ref{lem:NotZeroAE}, possibly reducing the value of $\mathcal M$, we can assume that $|\dot z(t)|\neq 0$ almost everywhere in $[0,1]$, and then, by Lemma \ref{lem:L2equal2M}, we can find a reparametrisation $v$ of $z$ such that $\mathcal L^2(v)=2\mathcal M(v)$. As a consequence, 
    \[
    2 \mathcal M(z)=\mathcal L^2(z)<\mathcal L^2(u)=2\mathcal M(u),
    \]
    leading to a contradiction. 
\end{proof}
%\bibliographystyle{plain} % We choose the "plain" reference style
%\bibliography{references.bib} 

\end{document}